\newtheorem{thm}{Theorem}[section]
\newtheorem{lem}[thm]{Lemma}
\newtheorem{cor}[thm]{Corollary}
\theoremstyle{definition}
\newtheorem{defn}[thm]{Definition}
\theoremstyle{remark}
\numberwithin{equation}{section}
\newcommand{\R}{\mathbb{R}}  
\newcommand{\C}{\mathbb{C}} 
\newcommand{\Z}{\mathbb{Z}} 
\newcommand{\N}{\mathbb{N}} 
\newcommand{\E}{\mathcal{E}}
\newcommand{\Ell}{\mathcal{L}}
\newcommand{\RLZ}{\R / L \Z}
\newcommand{\RZ}{\R / \Z}
\newcommand{\Si}{\textnormal{Si}}
\begin{document}
	
\date{\today}

\title{On the analyticity of critical points of the Möbius energy}

\author{Simon Blatt}
\address{Fachbereich Mathematik, Universit\"at Salzburg, Hellbrunner Strasse 34, 5020 Salzburg, Austria}
\email{simon.blatt@sbg.ac.at}
\urladdr{http://blatt.sbg.ac.at/}

\author{Nicole Vorderobermeier}
\address{Fachbereich Mathematik, Universität Salzburg, Hellbrunner Strasse 34, 5020 Salzburg, Austria}
\email{nicole.vorderobermeier@sbg.ac.at}

\keywords{Analyticity, knot energy, Möbius energy, method of majorants, bilinear Hilbert transform}

\subjclass[2010]{35B65, 57M25}

\begin{abstract}
We prove that smooth critical points of the M\"obius energy $\E$ parametrized by arc-length are analytic. Together with the main result in \cite{BRS16} this implies that critical points of the M\"obius energy with merely bounded energy are not only $C^\infty$ but also analytic. Our proof is based on Cauchy's method of majorants and a decomposition of the gradient which already proved useful in the proof of the regularity results in \cite{BR13} and \cite{BRS16}. To best of the authors knowledge, this is the first analyticity result in the context of non-local differential equations.
\end{abstract}

\maketitle

\tableofcontents

\section{Introduction}

Knots are part of our everyday life, while tying our shoes, in art works or on a fisherwoman's boat. They have always played an important role in craft and trades, culture and arts. During the last two centuries, knot theory has arisen to a research topic of great interest and it has had a sustainable impact on mathematics, especially on the development of topology in the 19th century \cite{TvdG96}. Currently, the theory of knots appears in several branches of mathematics, such as calculus of variations, nonlinear and geometric analysis, and topology as well as in modern quantum physics, see for example in Kauffman’s essay \cite{K05}, and biochemistry (protein molecules, e.g. \cite{KS98}, or DNA, e.g. \cite{CKS98} and \cite{GM99}).

In mathematical terms, one can describe a knot as a continuous embedding of a circle into the three-dimensional Euclidean space. Alternatively, a knot can be represented by a closed curve $\gamma: \RZ \rightarrow \R^3$ which embeds a circle $\RZ$ into $\R^3$. Two knots belong to the same knot class if there exists a deformation from one knot to the other called ambient isotopy, which avoids "cutting and gluing" and any self-intersection. In order to determine a representative with a particular "nice" shape within a given knot class, so-called knot energies appeared three decades ago. A knot is considered as "nice" if it is as smooth, symmetric, and as barely entangled as possible, which may be reached by modeling a form of self-avoidance and subsequently finding a minimizer of the knot energy. 

Such a knot energy was first constructed by Fukuhara, who focused on polygonal knots \cite{F88}. Later, O’Hara investigated several potential energies of knots in \cite{OH91} and \cite{OH92}. Amongst others he defined for any closed, Lipschitz continuous and regular curve $\gamma: \RZ\rightarrow\R^n$ for any $n\in\N$, $n \geq 2$, the energy 
\begin{align*}
\E(\gamma) := \iint_{(\RLZ)^2} \left(\frac{1}{|\gamma(u)-\gamma(v) |^2} - \frac{1}{\mathcal{D}(\gamma(u),\gamma(v))^2} \right) |\gamma'(u)| |\gamma'(v)| \mathrm{d}u \mathrm{d}v
\end{align*}
which is called M\"obius energy due to its invariance under Möbius transformations proved in \cite[Theorem 2.1]{FHW94}. 
Here, $\mathcal{D}(\gamma(u),\gamma(v))$ denotes the intrinsic distance of $\gamma(u)$ and $\gamma(v)$ along the curve, i.e. 
\begin{align*}
\mathcal{D}(\gamma(x),\gamma(y)) := \min \{\Ell (\gamma|_{[x,y]}), \Ell(\gamma) - \Ell (\gamma|_{[x,y]}) \}
\end{align*}
for all $|x-y|\leq \frac{1}{2}$ where $\Ell (\gamma):= \int_0^1 |\dot{\gamma} (t)| \mathrm{d}t$ is the length of $\gamma$. 
It is easy to see that all values of $\E$ are non-negative, since the intrinsic distance of two points of the curve is always greater than the Euclidean distance. So $\E$ is indeed bounded from below. Furthermore, $\E$ is invariant under reparametrization of the curve due to the factor $|\gamma'(u)| |\gamma'(v)|$. O'Hara in \cite[Theorem 1.1]{OH94} and Freedman, He and Wang in \cite[Lemma 1.2]{FHW94} proved that $\E$ is self-repulsive. However, the Möbius energy is not tight due to \cite[Theorem 3.1]{OH94}; which means that $\E$ does not blow up for a sequence of small knots that pulls tight. Let us mention that Ishizeki and Nagasawa decomposed the Möbius energy into three parts each of which is again Möbius invariant \cite{IN14}  \cite{IN16}. One of these parts is equal to the cosine formula of Doyle and Schramm \cite{AS97}. 
For further information about the Möbius energy, other knot energies and their properties we refer to \cite{OH03} and \cite{SvdM13}.

Now let for the rest of this article $\gamma: \RZ \rightarrow\R^n$, $n \in\N$, be a simple, closed curve parametrized by arc-length if not stated differently. Here, "simple" means that $\gamma$ is injective on $[0,1)$. It was shown in \cite{B12} that such curves have finite M\"obius energy if and only if they belong to the fractional Sobolev space $H^{\frac 32}(\mathbb R / \mathbb Z, \mathbb R^n).$

He proved in \cite[Chapter 5]{H00} based on Freedman, He, and Wang \cite[Theorem 5.4]{FHW94} that any local minimizer $\gamma$ of $\E$ with respect to the $L^\infty$-topology and $n=3$ is smooth in sense that  $\gamma\in C^\infty (\RZ,\R^3)$. Reiter generalized this result to a larger class of O'Hara's knot energy in \cite{R12} and furnished a proof that any critical point $\gamma$ of $\E$ in  $H^2(\RZ,\R^n)$ is smooth. The latest result that will be useful for our further investigations comes from Blatt, Reiter and Schikorra:

\begin{thm}[\cite{BRS16}] \label{smooth}
	Any critical point $\gamma$ of $\E$ with $\gamma\in H^{\frac{3}{2}}(\RZ, \R^n)$ parametrized by arc-length, i.e. every $\gamma\in H^{\frac{3}{2}}(\RZ, \R^n)$ that satisfies 
	\begin{align*}
	\delta \E(\gamma;h) := \lim_{\tau \rightarrow 0, \tau \neq 0} \frac{F(u+\tau h)- F(u)}{\tau} = 0 \ \ \ \ \forall h\in C^\infty(\RZ,\R^n),
	\end{align*}
	belongs to $C^\infty (\RZ,\R^n)$.
\end{thm}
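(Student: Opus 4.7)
The plan is to bootstrap regularity from $H^{3/2}$ up to $C^\infty$ by exploiting the Euler--Lagrange equation for $\E$. A convenient starting point is that, because $\gamma$ is arc-length parametrized, the intrinsic distance $\D(\gamma(u),\gamma(v))$ reduces to the $\gamma$-independent quantity $\min(|u-v|,1-|u-v|)$, so the only nonlinearity in the equation $\delta\E(\gamma;h)=0$ is carried by the Euclidean distance $|\gamma(u)-\gamma(v)|$.

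First, I would compute the first variation and symmetrize in $u,v$ to bring the gradient into the schematic form
\[
\delta\E(\gamma;h) \;=\; \int_{\RZ} \langle Q\gamma + R(\gamma),\, h\rangle\, du,
\]
where $Q$ is, modulo a smoothing perturbation, the componentwise operator with Fourier symbol $|k|^3$ (i.e.\ $(-\Delta)^{3/2}$), and $R(\gamma)$ collects all genuinely nonlinear remainders. The critical-point condition then reads $Q\gamma=-R(\gamma)$.

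The crux of the argument is to show that $R$ is \emph{subcritical} with respect to $Q$, in the sense that $R\colon H^s\to H^{s-3+\varepsilon}$ for some $\varepsilon>0$ whenever $s\geq 3/2$. Naive Sobolev multiplication loses exactly this $\varepsilon$, so one must exploit cancellation. The key device is the decomposition used in \cite{BR13} and \cite{BRS16}: factors such as $|\gamma(u)-\gamma(v)|^{-2}-|u-v|^{-2}$ are rewritten in terms of the Taylor remainder $\gamma(u)-\gamma(v)-(u-v)\gamma'(u)$, which vanishes to order $|u-v|^2$ pointwise, and three-term-commutator identities combined with bilinear-Hilbert-transform-type estimates convert this pointwise smallness into a fractional-Sobolev gain. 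Inverting $Q$ on the complement of its finite-dimensional kernel (translations) then gives $\gamma=-Q^{-1}R(\gamma)$; each application gains $\varepsilon$ derivatives, and iterating yields $\gamma\in H^s$ for every $s<\infty$ and hence $\gamma\in C^\infty$.

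The main obstacle is the subcriticality claim for $R$. Since $R$ mixes nonlocal fractional differentiation with products of $\gamma$ and $\gamma'$, when derivatives are distributed across the various factors one is forced to estimate commutators of the schematic form $[(-\Delta)^{3/4},\gamma']$ against Taylor remainders; these must themselves be controlled \emph{with a gain}, which is where paraproduct decompositions and the bilinear-Hilbert-transform estimates highlighted in the abstract enter essentially. One must additionally take care that the bootstrap closes under the arc-length constraint $|\gamma'|\equiv 1$: reparametrization invariance of $\E$ makes the tangential component of the Euler--Lagrange equation automatic, so no Lagrange multiplier obstructs the gain, but this has to be verified step by step so that each increment in regularity is compatible with the constraint.
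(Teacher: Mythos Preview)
This theorem is not proved in the present paper at all: it is quoted verbatim from \cite{BRS16} and used as a black box (see the sentence ``The latest result that will be useful for our further investigations comes from Blatt, Reiter and Schikorra'' preceding the statement, and its invocation in the proof of Theorem~\ref{main}). There is therefore no proof here to compare your proposal against.

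That said, your outline is a faithful high-level summary of the strategy actually carried out in \cite{BRS16}: the decomposition of the gradient into a leading part $Q$ with Fourier symbol $\sim |k|^3$ (cf.\ Theorem~\ref{thm:QFourier} of the present paper) plus lower-order remainders, the recasting of the critical-point equation as $Q\gamma = -R(\gamma)$, and an iterative gain of regularity by showing $R$ maps $H^s$ to $H^{s-3+\varepsilon}$. You correctly identify the genuine difficulty---the subcriticality of $R$ at the endpoint $s=3/2$---and the tools needed to overcome it (three-term commutators, fractional Leibniz rules, and estimates of bilinear-Hilbert-transform type). You also correctly flag the role of the arc-length constraint and the tangential/normal splitting. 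What your sketch does not supply, and what constitutes the bulk of \cite{BRS16}, is the actual proof of that subcriticality estimate; as written, the proposal is a correct road map rather than a proof.
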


These results of course raise the question, whether a critical point of the Möbius energy in $H^{\frac{3}{2}}(\RZ, \R^n)$ is not only smooth, but also analytic. The main result of this article answers this question:

\begin{thm}\label{main}
	Let $\gamma :\RZ\rightarrow \R^n$ be a closed, simple, and by arc-length parametrized curve in $ H^{\frac{3}{2}} (\RZ,\R^n)$. If $\gamma$ is a critical point of the Möbius energy $\E$, then $\gamma$ is analytic.
\end{thm}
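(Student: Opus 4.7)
By Theorem~\ref{smooth} we already know $\gamma\in C^\infty(\RZ,\R^n)$, so only the quantitative growth of derivatives remains. The plan is to establish Cauchy-type estimates
\begin{align*}
\|\gamma^{(k)}\|_{L^\infty(\RZ)} \leq A\,\rho^{k}\, k! \qquad \text{for all } k\geq 0,
\end{align*}
with constants $A,\rho>0$ independent of $k$; once these hold, the Taylor series of $\gamma$ at every base point converges on a fixed disk, proving analyticity. The two engines will be (i) the decomposition of $\nabla\E$ already exploited in \cite{BR13} and \cite{BRS16}, and (ii) Cauchy's method of majorants, upgraded to accommodate nonlocal bilinear operators.

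\textbf{Setting up the equation.} First I would write the Euler--Lagrange equation for an arc-length critical point in the form
\begin{align*}
 Q\gamma \;=\; R(\gamma),
\end{align*}
where $Q$ is the translation-invariant principal part of $\nabla\E$, of order three (morally $(-\partial_s^2)^{3/2}$ read off from the leading Möbius kernel $|w|^{-3}$), and $R(\gamma)$ is a remainder that is multilinear in $\gamma$ and its first two derivatives, with the nonlocality concentrated in bilinear Hilbert-transform-type kernels. Since $Q$ is an isomorphism $H^{s+3}\to H^{s}$ modulo constants on $\RZ$ and commutes with $\partial_s$, I can apply $\partial_s^k$ and invert to obtain
\begin{align*}
 \gamma^{(k)} \;=\; Q^{-1}\bigl(\partial_s^{k} R(\gamma)\bigr) \;+\; \text{(constant-frequency piece)}.
\end{align*}

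\textbf{The recursive estimate and majorants.} Expanding $\partial_s^{k} R(\gamma)$ by Leibniz and Faà di Bruno produces a finite sum, indexed by compositions $k_1+\cdots+k_p=k$, of multilinear expressions of the form
\begin{align*}
 T_\alpha\bigl(\gamma^{(k_1+m_1)},\ldots,\gamma^{(k_p+m_p)}\bigr),
\end{align*}
with $p\geq 2$, $m_i\in\{0,1,2\}$, and $T_\alpha$ a bounded multilinear nonlocal operator built from Hilbert-type transforms. Combining the gain of $Q^{-1}$ with uniform boundedness of these $T_\alpha$ on a suitable Banach algebra $X$ (e.g.\ an $L^\infty$-based or Wiener-type algebra) yields a recursive inequality of the shape
\begin{align*}
 a_k \;\leq\; C \sum_{\substack{k_1+\cdots+k_p = k \\ p\geq 2}} \binom{k}{k_1,\ldots,k_p}\, a_{k_1+m_1}\cdots a_{k_p+m_p} \;+\; (\text{controlled lower-order terms}),
\end{align*}
where $a_k := \|\gamma^{(k)}\|_{X}$. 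The majorant step then introduces a scalar formal power series $u(z)=\sum b_k z^k$ with $b_k\geq a_k/k!$, derives from the above a scalar implicit equation $u = P(u,z)$ with $P$ polynomial in $u$ and analytic in $z$, and invokes the holomorphic implicit function theorem to show $u$ is analytic near $0$. This forces $a_k\leq A\rho^{k}\,k!$, giving the sought Cauchy estimates.

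\textbf{Main obstacle.} The genuine difficulty, and the reason this is not a direct translation of classical Cauchy--Kovalevski, is the nonlocality of $R$: each differentiation not only distributes over arguments via Leibniz but also generates commutators between $\partial_s^{k}$ and the singular kernels hidden inside the bilinear Hilbert transforms. To keep the majorant recursion closed one needs boundedness of every $T_\alpha$ in the chosen algebra \emph{with operator norms that do not grow with the differentiation order and with combinatorial constants that absorb cleanly into the multinomial coefficients above}. Establishing such uniform multilinear bounds on bilinear Hilbert-type transforms on $\RZ$, together with sharp commutator estimates for $[\partial_s^k, T_\alpha]$ that match the Leibniz combinatorics, is where the bulk of the technical work will lie; everything else is bookkeeping around the standard majorant argument.
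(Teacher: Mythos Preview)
Your overall strategy matches the paper's: reduce to smoothness via Theorem~\ref{smooth}, decompose the gradient into a principal part $Q$ plus lower-order remainders, derive a recursive inequality for $\|\partial^l\gamma\|$ via Leibniz and Fa\`a di Bruno, and close with a majorant argument (the paper uses an explicit ODE and Cauchy--Kovalevskaya in place of the implicit function theorem, but this is cosmetic).

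There is, however, one genuine step you gloss over. The Euler--Lagrange equation for an arc-length critical point does \emph{not} immediately read $Q\gamma = R(\gamma)$ with $R$ of order $\leq 2$: what one actually gets is $P^\perp_{\dot\gamma}(Q\gamma + R_1\gamma + R_2\gamma) = 0$, i.e.\ only the normal component of the gradient vanishes. Rewriting this as $Q\gamma = P^T_{\dot\gamma}Q\gamma - P^\perp_{\dot\gamma}(R_1\gamma + R_2\gamma)$, the tangential term $P^T_{\dot\gamma}Q\gamma = \langle Q\gamma,\dot\gamma\rangle\,\dot\gamma$ is a priori still of order three. The key reduction, carried out in Section~\ref{sec:bilinearHilberttransform}, is that the arc-length identity $\langle\dot\gamma,\ddot\gamma\rangle\equiv 0$ lets one rewrite $\langle Q\gamma,\dot\gamma\rangle$ as a bilinear Hilbert transform $H_{t,st}(\ddot\gamma,\ddot\gamma)$ of \emph{second} derivatives only, via two Taylor expansions; see~\eqref{formula:compTQ}. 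Without this observation the recursion does not close, and your claim that ``$R(\gamma)$ is multilinear in $\gamma$ and its first two derivatives'' is unjustified.

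Your diagnosis of the main obstacle is also slightly off. There are no commutators $[\partial_s^k, T_\alpha]$ to worry about: the bilinear Hilbert transforms $H_{s_1,s_2}$ have translation-invariant kernels, so $\partial_s$ passes through and derivatives distribute over the arguments by ordinary Leibniz (this is exactly what happens in the proof of Lemma~\ref{est:Q}). The actual work is the uniform bound $\|H_{s_1,s_2}^\varepsilon(f,g)\|_{H^m} \leq C_H\|f\|_{H^m}\|g\|_{H^m}$ with $C_H$ independent of $s_1,s_2,\varepsilon$ (Theorem~\ref{thm:BilinearHT}), proved by a direct Fourier-side computation and Young's inequality for sequences. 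Once that is in hand, the combinatorics are straightforward.
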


Together Theorem \ref{smooth} and the characterization of the energy spaces in \cite{B12} immediately gives the following corollary.

\begin{cor}\label{maincor}
	Let $\gamma :\RZ\rightarrow \R^n$ be a closed, simple, and by arc-length parametrized curve of finite Möbius energy. If $\gamma$ is a critical point of the Möbius energy $\E$, then $\gamma$ is analytic.
\end{cor}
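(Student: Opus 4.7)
The plan is to derive the corollary by direct reduction to Theorem \ref{main}, using the identification of the Möbius energy space established in \cite{B12} as the only bridging ingredient. Informally: finite Möbius energy is equivalent to $H^{\frac{3}{2}}$-regularity for arc-length parametrized curves, and Theorem \ref{main} already dispatches the $H^{\frac{3}{2}}$ case, so no further analytic work should be required at the level of the corollary itself.

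First I would invoke the characterization from \cite{B12}: for a simple, closed, arc-length parametrized curve $\gamma : \RZ \to \R^n$, one has $\E(\gamma) < \infty$ if and only if $\gamma \in H^{\frac{3}{2}}(\RZ,\R^n)$. Applied to the $\gamma$ in the hypothesis, this immediately yields $\gamma \in H^{\frac{3}{2}}(\RZ,\R^n)$. The remaining hypotheses of Theorem \ref{main} -- simple, closed, arc-length parametrized, and a critical point of $\E$ -- are inherited verbatim from the corollary, so Theorem \ref{main} applies directly and gives analyticity of $\gamma$.

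The only subtlety worth verifying in this reduction is that the equivalence in \cite{B12} is stated precisely in the setting of arc-length (or at least regular) parametrizations, which matches the hypothesis here exactly, so no reparametrization step is needed and the implication is a one-line deduction. The main obstacle therefore does not sit inside the corollary but inside Theorem \ref{main} itself, whose proof must promote $H^{\frac{3}{2}}$-regularity all the way to analyticity; on the evidence of the abstract, that proof will presumably first invoke Theorem \ref{smooth} to upgrade $\gamma$ to $C^\infty$, and then run Cauchy's method of majorants on a suitable decomposition of the gradient of $\E$ -- in the spirit of the regularity arguments in \cite{BR13,BRS16} -- in order to control the growth of all derivatives uniformly. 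Once that hard step is granted, the corollary is immediate.
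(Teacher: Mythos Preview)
Your reduction is correct and matches the paper's own one-line justification preceding the corollary: finite M\"obius energy together with the characterization in \cite{B12} forces $\gamma\in H^{3/2}(\RZ,\R^n)$, and then Theorem~\ref{main} yields analyticity. No additional ingredient is needed at the level of the corollary.
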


There exist many results on the analyticity of solutions of classical analytic nonlinear elliptic partial differential equations. In 1904, Bernstein showed the analyticity of a solution of class $\mathcal{C}^3$ of nonlinear analytic elliptic equations of two independent variables in \cite{B04} by estimating higher order derivatives of solutions. Gevrey in \cite{G18} and Radó in \cite{R26} showed Bernstein's result by a similar strategy later. Also by the method of estimating higher order derivatives of solutions, Morrey and Nirenberg generalized the result to linear elliptic systems in \cite{MN57} in 1957 and moreover, Friedman to nonlinear elliptic systems of partial differential equations in \cite{F58} in 1958. Another method goes back to Lewy in 1929, who proved Bernstein's theorem by extension to the complex domain in \cite{L29}. By using this alternative method, Hopf in \cite{H32}, Petrovski in \cite{P39} and Morrey \cite{M58-1} and \cite{M58-2} generalized bit by bit Bernstein's theorem up to the analyticity of solutions of analytic nonlinear elliptic systems of partial differential equations as well. New ideas for proving the analyticity of solutions of analytic nonlinear elliptic equations were developed by Kato in \cite{K96} and Hashimoto in \cite{H06}.

To best of the authors knowledge, the result above is the first analyticity result in the context of non-local differential equations.

\subsubsection*{\textbf{Exposé of the present work.}} 

The main goal of this article is not only to give a rigorous proof of Theorem \ref{main} but also to give a proof which is as elementary as possible.
In section \ref{sec:preliminaries} we recall some basic definitions and properties of fractional Sobolev spaces and Fourier series. We also characterize analytic functions and state an ordinary differential equation (ODE)-version of the theorem of Cauchy-Kovalevskaya. We close the Preliminaries with recapitulating Faà di Bruno's formula. 
In section \ref{section:decomposition} we decompose the first variation of the Möbius energy into the orthogonal projection of a main part $Q$ and two remaining parts $R_1$ and $R_2$ of lower order. The main part $Q$ and its derivatives get estimated in section \ref{sec:maintermQ}. Since the orthogonal projection of $Q$ appears in the first variation of the Möbius energy, we estimate the tangential part of $Q$ using new estimates for a type of  bilinear Hilbert transform in section \ref{sec:bilinearHilberttransform}. In section \ref{sec:bilinearHilberttransform} and section \ref{sec:remaining parts} we will cut off the singularities of the singular integrals involved and derive uniform estimates. More precisely, in section \ref{sec:remaining parts} we rewrite  the orthogonal projection of the truncated remaining terms so that they may be expressed by integrals over analytic functions. 
 In section \ref{sec:mainpart} we will realize that since the estimates do not depend on the cutoff parameter, the same estimates hold for the actual functionals, more precisely for the orthogonal projections of $Q$, $R_1$ and $R_2$ and its derivatives. In the end we will use the  estimates to prove Theorem \ref{main} using the method of majorants.

\section{Preliminaries}\label{sec:preliminaries}

\subsection{Fractional Sobolev spaces}\label{sec:fractionalsobolevspaces}

Let us recall certain properties of fractional Sobolev spaces that are useful throughout the whole article. We assume some familiarity with the theory of weak derivatives and Fourier analysis. For a detailed introduction to Sobolev spaces we may refer to \cite{AF03}, \cite{E98} or \cite{T96}. 

Let $n\in \N$. We denote by $|\cdot | $ the real Euclidean norm $|x|:= \sqrt{\langle x, x\rangle_{\R^n}}$ on $\R^n$ and by $|\cdot |_{\C^n} $ the Euclidean norm $|z|_{\C^n}:= \sqrt{\langle z,z\rangle_{\C^n}}$ on $\C^n$.

In the sequel we work with closed curves on the periodic domain $\RZ$, i.e. we consider curves $f: \R \rightarrow \R^n$ which are periodic with period $1$, that means
\begin{align*}
f (x+1)= f(x) \quad  \forall x\in \R.	
\end{align*}

For $f\in L^2(\RZ, \C^n)$ and $k\in\Z$ the \emph{$k$-th Fourier coefficient of $f$} is defined as
\begin{align*}
\widehat{f}(k) = \int_0^1 f(x) e^{-2\pi i k x} \mathrm{d} x
\end{align*}
and the \emph{Fourier series of $f$} in $x\in \RZ$ is given by $\sum_{k\in\Z} \widehat{f}(k) e^{2\pi i k x}.$

The \emph{fractional Sobolev space} of order $s\geq 0$ (to be more precise: the Bessel potential space of order $s \geq 0$) is defined by
\begin{align*}
H^s(\RZ, \C^n) := \{f\in L^2 (\RZ, \C^n) \ | \ \| f \|_{H^s}:= \| f \|_{H^s(\RZ,\C^n)} := \sqrt{(f,f)_{H^s}} < \infty  \},
\end{align*}
equipped with the scalar product  
\begin{align*}
(f,g)_{H^s} := (f,g)_{H^s(\RZ,\C^n)} := \sum_{k\in\Z} (1+k^2)^s \langle \widehat{f}(k), \widehat{g}(k) \rangle_{\C^n}.
\end{align*}

We will use the embeddings $H^s(\RZ,\C^n)\subseteq H^t(\RZ,\C^n)$ for any $s>t$, $s,t\geq 0$ \cite[Proposition 2.1, Corollary 2.3]{DNPV12} as well as $H^s(\RZ,\C^n) \subseteq C(\RZ,\C^n)$ for any $s> \frac{1}{2}$ \cite[Theorem 8.2]{DNPV12}, which implies that the Fourier series of $f$ converges absolutely and uniformly to $f$ for any $f\in H^s(\RZ,\C)$, $s> \frac{1}{2}$. 

It is well-known that for $m \in \N_0$ the space $H^m$ agrees with the classical Sobolev space and the norm 
 $\|f\|_{W^m}:= \left(\sum_{\nu=0}^{m} \|\partial^\nu f\|^2_{L^2}\right)^{\frac{1}{2}}$ is equivalent to $\|\cdot\|_{H^m}$ (cf.  \cite[Lemma 1.2]{R09} or \cite[7.62]{AF03}).
Moreover, we note that for $m\in\N$ there exists a positive constant $C_m< \infty$ such that 
	\begin{align}\label{prop:Banachalgebra}
	\|fg\|_{H^m} \leq C_m \|f\|_{H^m} \|g\|_{H^m}
	\end{align}
for all $f,g \in H^m(\RZ,\R)$ by \cite[Theorem 4.39]{AF03}.

We will furthermore use the following simple consequence of the chain rule for Sobolev functions.

\begin{lem} \label{rem:compSobolevnorm}
	There exists a positive constant $C_c < \infty$ only depending on $d$ and $e$ such that 
	\begin{align}\label{formula:compH1}
	\|g\circ f\|_{H^1} \leq C_c \|g\|_{C^1} (1+ \|f\|_{H^1})
	\end{align} 
	for all $f \in H^1(\RZ ,\R^{d})$, $g \in C^1 (\R^{d} ,\R^e)$ and $e,d\in\N$. 
\end{lem}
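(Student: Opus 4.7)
The plan is to reduce the claim to the classical Sobolev norm $\|\cdot\|_{W^1}$, which the text recalls is equivalent to $\|\cdot\|_{H^1}$ with a constant that only depends on the target dimension $e$. It then suffices to prove estimates of the form $\|g\circ f\|_{L^2} \leq C\|g\|_{C^1}$ and $\|(g\circ f)'\|_{L^2} \leq C\|g\|_{C^1}\|f\|_{H^1}$ with constants $C$ depending only on $d$ and $e$, since adding the two contributions and using $\sqrt{a^2+b^2}\leq a+b$ produces the product structure $\|g\|_{C^1}(1+\|f\|_{H^1})$.

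The $L^2$-estimate is immediate: because the domain $\RZ$ has unit measure, $\|g\circ f\|_{L^2}\leq \sup_{y\in\R^d}|g(y)|\leq \|g\|_{C^1}$. For the derivative, first I would observe that $H^1(\RZ,\R^d)$ embeds continuously into $C(\RZ,\R^d)$ (the one-dimensional Sobolev embedding already mentioned in the preliminaries), so $f(\RZ)$ is a compact subset of $\R^d$. On a neighborhood of this compact set $g$ is Lipschitz with Lipschitz constant bounded by $\sup \|Dg\|\leq C(d,e)\|g\|_{C^1}$, the dimensional factor arising from comparing the operator norm with the supremum of the partial derivatives. The standard chain rule for the composition of a Lipschitz function with an $H^1$-function then gives $(g\circ f)'(x) = Dg(f(x))\cdot f'(x)$ for almost every $x\in\RZ$, and a pointwise estimate followed by integration yields
\begin{equation*}
\|(g\circ f)'\|_{L^2} \leq C(d,e)\|g\|_{C^1}\|f'\|_{L^2} \leq C(d,e)\|g\|_{C^1}\|f\|_{H^1}.
\end{equation*}

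Combining the two bounds via the norm equivalence produces the desired estimate with a constant $C_c$ depending only on $d$ and $e$. There is no genuine obstacle here: the only step requiring any care is the invocation of the chain rule for a $C^1$ function composed with an $H^1$ curve, which is justified by the Sobolev embedding into continuous functions that makes $g$ effectively Lipschitz on the range of $f$.
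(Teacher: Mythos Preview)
Your proposal is correct and follows essentially the same approach as the paper: reduce to the $W^1$-norm via the norm equivalence, bound $\|g\circ f\|_{L^2}$ by $\|g\|_\infty$ using that $\RZ$ has unit measure, and bound $\|(g\circ f)'\|_{L^2}$ by $\|g'\|_\infty\|f'\|_{L^2}$ via the chain rule. The paper's proof is slightly more terse and omits the explicit justification of the chain rule and the dimensional constant you include, but the argument is the same.
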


\begin{proof}
	 This follows from
	\begin{align*}
	\|g\circ f\|_{W^1} & \leq \|g\circ f\|_{L^2} + \|(g\circ f){'}\|_{L^2} \\
	& \leq \|g\|_\infty + \|g'\|_\infty \|f'\|_{L^2} \\
	& \leq \|g\|_{C^1} (1+ \|f\|_{W^1})
	\end{align*}
	and by the equivalence of $W^1$- und $H^1$-norm as stated above.
\end{proof}

\subsection{Properties of analytic functions}

Let us recall some elementary facts about analytic functions.
More on the topic can be found  for example in  \cite[Chapter 6.4]{E98} and \cite[Chapter 1, D]{F95}. 
Let $n,m\in\N$ and $\Omega \subseteq \R^n $ be an open set throughout this section.

We will use the following well-known characterization of analytic functions, which can be deduced from \cite[Proposition 2.2.10]{KP02} by a standard covering argument. 
\begin{thm}\label{thm:Tayloranalytic}
	Let $f\in C^\infty (\Omega,\R^n)$. Then $f$ is analytic on $\Omega$ if and only if for every compact set $K \subset \Omega$
	there are constants $r_K>0$, $C_K < \infty$ such that
	\begin{align}\label{formula:partialf}
	\|\partial^\alpha f\|_{L^\infty(K)} \leq C_K \frac{|\alpha|!}{r_K^{|\alpha|}}
	\end{align}
	holds for every multiindex $\alpha \in \mathbb N_0^m$.
\end{thm}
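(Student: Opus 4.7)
My plan is to reduce the compact-set statement to the single-point analyticity criterion provided by \cite[Proposition 2.2.10]{KP02}, which asserts that $f \in C^\infty$ is analytic at a single point $x_0$ if and only if there exist constants $C_{x_0}, r_{x_0} > 0$ and an open neighborhood $U_{x_0}$ of $x_0$ on which the uniform bound $\|\partial^\alpha f\|_{L^\infty(U_{x_0})} \leq C_{x_0} |\alpha|! \, r_{x_0}^{-|\alpha|}$ holds for every multiindex $\alpha$. I take this pointwise characterization as a black box and only need to patch it into a statement that is uniform on compact subsets of $\Omega$.

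For the forward direction, assume $f$ is analytic on $\Omega$ and fix a compact set $K \subset \Omega$. Applying the cited proposition at each $x \in K$ produces an open neighborhood $U_x \subset \Omega$ and constants $C_x, r_x > 0$ satisfying the local derivative bound. Since $K$ is compact, the open cover $\{U_x\}_{x \in K}$ admits a finite subcover $U_{x_1}, \ldots, U_{x_N}$; setting $r_K := \min_{1 \leq i \leq N} r_{x_i} > 0$ and $C_K := \max_{1 \leq i \leq N} C_{x_i} < \infty$ then yields the desired uniform estimate on $K$.

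For the reverse direction, fix an arbitrary point $x_0 \in \Omega$ and choose $\rho > 0$ small enough that $K := \overline{B_\rho(x_0)} \subset \Omega$ (which is compact). The hypothesis supplies constants $C_K, r_K > 0$ for which the derivative bound holds throughout $K$, hence in particular on the open neighborhood $B_\rho(x_0)$ of $x_0$. The converse direction of the pointwise criterion in \cite[Proposition 2.2.10]{KP02} then gives analyticity of $f$ at $x_0$, and since $x_0 \in \Omega$ was arbitrary, $f$ is analytic on all of $\Omega$.

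I do not anticipate any genuine obstacle; the argument is essentially a bookkeeping exercise around the cited pointwise fact. The only subtlety is choosing the constants on the finite subcover in the right direction, namely $r_K$ as a minimum so that every local geometric decay rate still dominates and $C_K$ as a maximum so that a single prefactor absorbs every local one, so that one inequality simultaneously controls all local estimates on $K$.
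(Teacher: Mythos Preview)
Your proposal is correct and matches the paper's approach exactly: the paper does not give a detailed proof but simply states that the result ``can be deduced from \cite[Proposition 2.2.10]{KP02} by a standard covering argument,'' and your write-up is precisely that covering argument spelled out.
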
 

Combining this with standard embeddings for Sobolev spaces we get the following.

\begin{cor} \label{cor:TayloranalyticSob}
	Let $f\in C^\infty (\Omega,\R^n)$. Then $f$ is analytic on $\Omega$ if and only if for every compact set $K \subset \Omega$
	there are constants $r_K>0$, $C_K < \infty$ such that
	\begin{align}\label{formula:partialfH}
	\|\partial^\alpha f\|_{H^1} \leq C_K \frac{|\alpha|!}{r_K^{|\alpha|}}
	\end{align}
	holds for every multiindex $\alpha \in \mathbb N_0^m.$
\end{cor}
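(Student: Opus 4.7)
The plan is to deduce Corollary \ref{cor:TayloranalyticSob} directly from Theorem \ref{thm:Tayloranalytic} by showing that the $L^\infty$-estimate \eqref{formula:partialf} and the $H^1$-estimate \eqref{formula:partialfH} are equivalent up to readjusting the constants $C_K$ and $r_K$. As elsewhere in the paper, I read $\|\cdot\|_{H^1}$ as the fractional Sobolev norm on $\RZ$ introduced in Section \ref{sec:fractionalsobolevspaces}, so the relevant case is $\Omega = \RZ$ (an open subset of $\R$ being handled identically with cut-offs).

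For the forward direction, assuming $f$ analytic, I would fix a compact $K \subset \Omega$ and choose a compact neighborhood $K'$ with $K \subset (K')^\circ \subset K' \subset \Omega$. Theorem \ref{thm:Tayloranalytic} applied to $K'$ supplies constants $r_{K'}>0$ and $C_{K'}<\infty$ with $\|\partial^\beta f\|_{L^\infty(K')} \leq C_{K'} |\beta|!/r_{K'}^{|\beta|}$ for every multiindex $\beta$. Using the equivalence of $H^1$- and $W^1$-norms recorded in Section \ref{sec:fractionalsobolevspaces}, I would estimate
\begin{align*}
\|\partial^\alpha f\|_{H^1} \leq C \bigl(\|\partial^\alpha f\|_{L^2} + \|\partial^{\alpha+1} f\|_{L^2}\bigr) \leq C \bigl(\|\partial^\alpha f\|_{L^\infty(K')} + \|\partial^{\alpha+1} f\|_{L^\infty(K')}\bigr),
\end{align*}
and then insert the analytic bounds. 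The extra factor $(|\alpha|+1)$ that appears via $(|\alpha|+1)! = (|\alpha|+1)\cdot|\alpha|!$ is absorbed by shrinking the radius to, say, $\tilde r_K := r_{K'}/2$ and enlarging the prefactor, since $(|\alpha|+1)/2^{|\alpha|}$ is bounded uniformly in $\alpha$. This yields \eqref{formula:partialfH} for $K$.

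For the reverse direction, I would invoke the Sobolev embedding $H^1(\RZ) \hookrightarrow L^\infty(\RZ)$ recalled in Section \ref{sec:fractionalsobolevspaces} (valid since $1 > 1/2$); denoting its operator norm by $C_S$, one obtains $\|\partial^\alpha f\|_{L^\infty(\RZ)} \leq C_S \|\partial^\alpha f\|_{H^1} \leq C_S C_K |\alpha|!/r_K^{|\alpha|}$ for every multiindex $\alpha$, and Theorem \ref{thm:Tayloranalytic} then immediately gives analyticity of $f$.

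Since Theorem \ref{thm:Tayloranalytic} is already in hand and the Sobolev embedding on $\RZ$ is at our disposal, the result is a formality; there is no substantive obstacle. The only mild bookkeeping, in the forward implication, is the appearance of the factor $(|\alpha|+1)$, which is absorbed by the standard trick of accepting a slightly smaller radius $\tilde r_K < r_{K'}$.
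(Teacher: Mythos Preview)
Your proposal is correct and matches the paper's approach exactly: the paper does not give a detailed proof but simply states that the corollary follows by ``combining this with standard embeddings for Sobolev spaces,'' and your argument spells out precisely those two embeddings (the $W^1$--$H^1$ equivalence together with $L^2 \leq L^\infty$ on a bounded domain for the forward direction, and $H^1 \hookrightarrow L^\infty$ for the reverse). Your handling of the stray factor $|\alpha|+1$ by shrinking the radius is the standard absorption and is fine.
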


Let us mention a special case of the theorem of Cauchy-Kovalevskaya, which is originally an existence and uniqueness theorem for analytic nonlinear partial differential equations associated with Cauchy initial value problems. Proofs based on the \emph{method of majorants} which we will also use to prove the main result of this article can be found in \cite[Chapter 1, D]{F95} and \cite[Chapter 4.6.3, Theorem 2]{E98}.

\begin{thm}[Cauchy-Kovalevskaya - ODE case]\label{CK}
	Let $ \varepsilon > 0$. Suppose $g: \R^n \rightarrow \R^n$ is real analytic around $0$, and $f \in C^\infty(]-\varepsilon, \varepsilon[,\R^n)$, $f(x)= (f_1(x),\ldots,f_n(x))$, be a solution of the initial value problem
	\begin{align}
	\begin{cases}
	\dot{f}(x) & = g (f(x)) \textnormal{ for } x \in ]-\varepsilon,\varepsilon[\\
	f(0)& =0.
	\end{cases}
	\end{align}
	Then $f$ is real analytic around $0$.
\end{thm}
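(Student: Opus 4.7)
The plan is to apply Cauchy's classical \emph{method of majorants}. The starting observation is that by differentiating the ODE $\dot f = g(f)$ repeatedly at $x=0$ and applying Faà di Bruno's formula, one obtains for every $k \geq 1$ and every component $j$ a representation
\begin{align*}
f_j^{(k)}(0) = P_{k,j}\bigl((\partial^\alpha g_i(0))_{|\alpha|\leq k-1,\, 1\leq i\leq n}\bigr),
\end{align*}
where $P_{k,j}$ is a \emph{universal} polynomial with non-negative integer coefficients in the indicated Taylor coefficients of $g$ at $0$. This non-negativity, which is built into the combinatorial structure of Faà di Bruno's formula together with the initial condition $f(0)=0$, is the whole point of the method and will enable a termwise comparison of Taylor expansions later on.

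Next I would construct a suitable scalar majorant. Since $g$ is analytic around $0$, standard Cauchy estimates yield constants $M,r>0$ such that
\begin{align*}
\frac{|\partial^\alpha g_i(0)|}{\alpha!} \leq \frac{M}{r^{|\alpha|}} \qquad \text{for all } \alpha \in \N_0^n,\ i=1,\ldots,n.
\end{align*}
I would then introduce
\begin{align*}
G(y_1,\ldots,y_n) := \frac{M}{1-(y_1+\cdots+y_n)/r},
\end{align*}
which is analytic near the origin, has non-negative Taylor coefficients, and majorizes each $g_i$ componentwise, i.e.\ $|\partial^\alpha g_i(0)| \leq \partial^\alpha G(0)$ for every $\alpha$ and $i$. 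The associated majorant Cauchy problem $\dot F_j = G(F_1,\ldots,F_n)$, $F_j(0)=0$, has, by symmetry in the $y_i$, all components equal to a common function $F$ solving the scalar separable ODE $\dot F = M/(1-nF/r)$ with $F(0)=0$, which integrates explicitly to
\begin{align*}
F(x) = \frac{r}{n}\Bigl(1 - \sqrt{1 - 2Mnx/r}\Bigr).
\end{align*}
This is manifestly analytic in a neighbourhood of $0$, so by Theorem \ref{thm:Tayloranalytic} there exist $C,\rho>0$ with $F^{(k)}(0)/k! \leq C\rho^k$ for every $k\in \N_0$.

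The punchline is that the \emph{same} universal polynomials govern the majorant problem, so $F^{(k)}(0) = P_{k,j}\bigl((\partial^\alpha G(0))\bigr)$ for every $j$. Combining this identity with the pointwise comparison of Taylor coefficients and the non-negativity of the coefficients of $P_{k,j}$, one concludes inductively
\begin{align*}
|f_j^{(k)}(0)| \leq F^{(k)}(0), \qquad k\in\N_0,\ j=1,\ldots,n,
\end{align*}
and therefore $|f_j^{(k)}(0)|/k! \leq C\rho^k$. Theorem \ref{thm:Tayloranalytic} then yields the analyticity of $f$ around $0$. The step requiring the most care is the verification of the non-negativity and universality of the coefficients of $P_{k,j}$; this is however a purely combinatorial consequence of Faà di Bruno's formula and constitutes no genuine obstacle. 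The essential conceptual work is choosing the explicit form of $G$ so that the associated majorant ODE collapses, by symmetry, to a scalar equation that can be integrated in closed form.
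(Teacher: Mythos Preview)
The paper does not actually prove Theorem~\ref{CK}; it is stated as a preliminary result with proofs referred to \cite{F95} and \cite{E98}. Your proposal is precisely the classical method-of-majorants argument found in those references, and it is correct: the universal polynomials with non-negative coefficients arising from Fa\`a di Bruno, the explicit geometric-series majorant $G(y)=M/(1-(y_1+\cdots+y_n)/r)$, the reduction by symmetry to a scalar separable ODE with closed-form solution, and the termwise comparison are all standard and sound. One minor quibble: the final appeal to Theorem~\ref{thm:Tayloranalytic} to conclude analyticity of $f$ from the bound $|f_j^{(k)}(0)|\leq C k!\,\rho^k$ only gives the estimate at the single point $0$, not on a compact neighbourhood as that theorem literally requires; the cleaner conclusion is simply that the formal Taylor series $\sum_k f^{(k)}(0)x^k/k!$ has positive radius of convergence and, since $f$ is $C^\infty$ and solves an ODE with analytic right-hand side, must coincide with $f$ near $0$ (or one repeats the argument at every point in a small interval). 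This is a cosmetic issue and does not affect the substance of your argument.
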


\subsection{Faà di Bruno's formula}

For two functions $f, g\in C^k (\R,\R)$ Faà di Bruno's formula tells us that
	\begin{align*}
	&\left(\frac{d}{dt}\right)^k g(f(t)) = \\
	&\sum_{\substack{m_1+2m_2+ \cdots km_k=k, \\ m_1,\ldots,m_k\in\N_0}} \frac{k!}{m_1!1!^{m_1}m_2!2!^{m_2}\ldots m_k!k!^{m_k}} g^{(m_1+\cdots +m_k)}(f(t))\prod_{j=1}^{k} (f^{(j)} (t))^{m_j}.
	\end{align*}
R. Mishkov generalized this formula to the multivariate case \cite{M00}.
\begin{lem}\label{rem:FaadiBruno}
	Let $U\subseteq\R$ be a neighborhood of $0$ and $f\in C^k(U, \R^n)$, $n\in\N_0$. Moreover, let $V\subseteq\R^n$ be a neighborhood of $f(0)$ such that $f(U)\subseteq V$ and $g\in C^k(V, \R)$. Then we have for any $k\in\N$ and $x\in U$
	\begin{align*}
	\partial^k g(f(x)) = \sum_0 \sum_1 \cdots \sum_k \frac{k!}{\prod_{i=1}^k (i!)^{r_i} \prod_{i=1}^k\prod_{j=1}^nq_{ij}! }  (\partial^\alpha g) (f(x)) \prod_{i=1}^k ( f_1^{(i)}(x))^{q_{i1}} \cdots ( f_n^{(i)}(x))^{q_{in}}
	\end{align*}
	where the sums are over all non-negative integer solutions of the following equations
	\begin{align*}
	& \sum_0 :  r_1 + 2r_2 + \cdots + k r_k = k, \\
	& \sum_1 : q_{11} + q_{12} + \cdots + q_{1n} = r_1 \\
	& \ \ \vdots  \\
	& \sum_k  : q_{k1} + q_{k2} + \cdots + q_{kn} = r_k
	\end{align*}
	and $\alpha := (\alpha_1,\ldots,\alpha_n)$ with $\alpha_j := q_{1j} + q_{2j} + \cdots + q_{kj}$ for $1\leq j \leq n$. 	
\end{lem}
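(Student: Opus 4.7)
The plan is to prove the formula by induction on $k$, but the most transparent route is via a combinatorial interpretation. Iterated application of the multivariate chain rule expresses $\partial^k (g \circ f)(x)$ as a sum indexed by pairs $(\pi, \lambda)$, where $\pi$ is a set partition of $\{1, \ldots, k\}$ and $\lambda \colon \pi \to \{1, \ldots, n\}$ labels each block by a component of $f$. A block $B$ labeled $j$ contributes a factor $f_j^{(|B|)}(x)$, and the multiset of labels $\{\lambda(B)\}_{B \in \pi}$ determines the multi-index $\alpha$ on which $\partial^\alpha g$ acts. The base case $k=1$ is just the multivariate chain rule $\partial (g\circ f)(x) = \sum_j (\partial_j g)(f(x)) f_j'(x)$, which matches the statement with $r_1 = 1$ and a single $q_{1j} = 1$.

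For the induction step, I would differentiate the stage-$k$ identity and apply the Leibniz rule. The new derivative either lands on $(\partial^\alpha g)(f(x))$, producing $\sum_j (\partial_j \partial^\alpha g)(f(x)) f_j'(x)$, or it lands on some factor $(f_j^{(i)}(x))^{q_{ij}}$, producing $q_{ij}\, f_j^{(i+1)}(x)\, (f_j^{(i)}(x))^{q_{ij}-1}$. In the partition picture these correspond, respectively, to adjoining $k+1$ as a new singleton block labeled $j$, or to inserting $k+1$ into one of the $q_{ij}$ existing size-$i$ blocks labeled $j$. After collecting terms by the new shape $(r_i', q_{ij}')$, the emerging coefficient must match $\frac{(k+1)!}{\prod_i (i!)^{r_i'} \prod_{i,j} q_{ij}'!}$.

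The main obstacle is precisely this coefficient bookkeeping, and the cleanest way around it is to avoid direct arithmetic altogether and prove the formula by a single counting identity: the coefficient $\frac{k!}{\prod_i (i!)^{r_i} \prod_{i,j} q_{ij}!}$ is exactly the number of labeled partitions $(\pi, \lambda)$ of shape $(r_i, q_{ij})$, which factors as the number of set partitions of shape $(r_i)$ times the number of block-labelings compatible with $(q_{ij})$, namely $\frac{k!}{\prod_i(i!)^{r_i} \prod_i r_i!} \cdot \prod_i \frac{r_i!}{\prod_j q_{ij}!}$. Once this identification is in place, the induction collapses to the observation that every labeled partition of $\{1, \ldots, k+1\}$ arises from exactly one labeled partition of $\{1, \ldots, k\}$ by one of the two operations above, which is precisely what the Leibniz rule encodes. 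Since the statement is classical and the argument above follows Mishkov \cite{M00}, one could alternatively simply invoke that reference; I include the sketch above mainly to keep the paper self-contained on this combinatorial point.
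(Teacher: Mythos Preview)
Your sketch is correct. The paper itself does not prove this lemma: it is stated as a direct quotation of Mishkov's result \cite{M00}, with no argument given. Your combinatorial proof via labeled set partitions is a standard and valid route, and the coefficient identity
\[
\frac{k!}{\prod_i(i!)^{r_i}\prod_i r_i!}\cdot\prod_i\frac{r_i!}{\prod_j q_{ij}!}=\frac{k!}{\prod_i(i!)^{r_i}\prod_{i,j}q_{ij}!}
\]
is exactly what is needed to match the number of labeled partitions of a given shape to the stated coefficient. The inductive step you describe (new singleton block versus insertion into an existing block) is precisely the bijective content of differentiating once more, so the argument closes. Since the paper only needs the \emph{existence} of a universal polynomial $p_k^{(n)}$ with non-negative coefficients that is one-homogeneous in the $\partial^\alpha g$ entries (equation~\eqref{universalpoly}), even a bare citation to \cite{M00} suffices for its purposes; your self-contained sketch is a bonus rather than a requirement.
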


In the following we will not need the precise form of Faà di Bruno's formula in the multivariat case but only the fact that there is a universal polynomial $p_k^{(n)}$ with non-negative coefficients independent of $f,g$ such that 
\begin{align}\label{universalpoly}
	\partial^k g(f(x)) = p_k^{(n)} (\{\partial_\alpha g\}_{|\alpha|\leq k}, \{\partial^j f_i\}_{i=1, \ldots, n, j=1, \ldots, k} ).
\end{align}
for all $f \in C ^k (\mathbb R, \mathbb R ^n)$ and $g \in C^k(\mathbb R ^n, \mathbb R).$ Furthermore, $p_k^{(n)}$ is one-homogeneous in the first entries.

\section{Decomposition of the first variation of $\E$}\label{section:decomposition}

In 1994, Freedman, He and Wang stated the Gâteaux differentiability of the Möbius energy $\E$ and additionally a formula of the first variation of $\E$ \cite[Lemma 6.1, Lemma 6.4.]{FHW94}. Reiter could prove that $\E$ is Fréchet differentiable and he computed its first variation at an injective regular curve $\eta\in H^2$ in direction $h\in H^2$ \cite[Chapter 2]{R12}. In order to work with the Möbius energy in a sophisticated way, He and Reiter used a kind of linearization of the first variation of $\E$ in \cite{H00} and \cite{R12}. Using these ideas and results, we want to determine a suitable decomposition of the first variation of the Möbius energy $\E$ at $\gamma$ which helps us proving the main result Theorem \ref{main}.  

Assume $n\in\N$ and let $\gamma\in C^\infty(\RZ,\R^n)$ be a simple, closed, and by arc-length parametrized curve. Recall that the orthogonal projection of $\R^n$ onto the normal vector plane to $\gamma$ at $\gamma(x)$  $P^\perp_{\dot{\gamma}(x)} : \R^n \rightarrow \R^n$ for every $g\in \R^n$ is given by 
\begin{align*}
(P^\perp_{\dot{\gamma}}g)(x) := P^\perp_{\dot{\gamma}(x)}(g) := g-\langle g,\dot{\gamma}(x) \rangle_{\R^n} \dot{\gamma}(x). 
\end{align*}
Then we get by \cite[Lemma 6.4.]{FHW94} and \cite[Theorem 2.24]{R12} the following

\begin{thm}
	The first variation of $\E$ at $\gamma \in H^2(\RZ,\R^n)$ in direction $h \in H^2 (\RZ,\R^n)$ may be written as
	\begin{align}\label{formula:deltaE}
	\delta \E(\gamma, h) = \int_{0}^{1} \langle (H\gamma)(x), h(x) \rangle_{\R^n} \mathrm{d} x
	\end{align}
	where
	\begin{align*}
	(H\gamma)(x) := 2 \lim_{\epsilon \downarrow 0} \int_{|w| \in [\epsilon, \frac{1}{2}]} \left(2 \frac{P^\perp_{\dot{\gamma}(x)}(\gamma(x+w)-\gamma(x))}{|\gamma(x+w)-\gamma(x)|^2} - \ddot{\gamma}(x) \right) \frac{\mathrm{d} w}{|\gamma(x+w)-\gamma(x)|^2}.
	\end{align*}
\end{thm}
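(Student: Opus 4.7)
The plan is to compute $\delta\E(\gamma,h)$ by direct Gâteaux differentiation, in the style of Freedman--He--Wang and Reiter. Since the integrand of $\E$ has a non-integrable singularity on the diagonal, I would first regularize by setting
\begin{align*}
\E_\varepsilon(\gamma):=\iint_{|u-v|\in[\varepsilon,\tfrac12]}\left(\tfrac{1}{|\gamma(u)-\gamma(v)|^{2}}-\tfrac{1}{\D(\gamma(u),\gamma(v))^{2}}\right)|\gamma'(u)||\gamma'(v)|\,du\,dv.
\end{align*}
On this set the integrand is smooth in $(u,v)$ when $\gamma$ is simple and $C^{2}$, so differentiation under the integral is immediate, and $\E_\varepsilon(\gamma+\tau h)\to\E(\gamma+\tau h)$ uniformly in $\tau$ near $0$.

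Differentiating at $\tau=0$ with $\gamma_\tau:=\gamma+\tau h$ and using $|\dot\gamma|=1$ yields three families of terms: one from $|\gamma_\tau(u)-\gamma_\tau(v)|^{-2}$, which gives the factor $-2\langle\gamma(u)-\gamma(v),h(u)-h(v)\rangle/|\gamma(u)-\gamma(v)|^{4}$; one from $|\gamma'_\tau|$, which contributes $\langle\dot\gamma,\dot h\rangle$ on each of the two variables weighted by the original integrand; and one from $\D_\tau(\gamma_\tau(u),\gamma_\tau(v))^{-2}$, which on each branch equals a signed integral $-\tfrac{2}{|v-u|^{3}}\,\mathrm{sign}(v-u)\int_u^v\langle\dot\gamma(t),\dot h(t)\rangle\,dt$, since the intrinsic distance along $\gamma_\tau$ between parameters $u$ and $v$ is $\int_u^v|\gamma'_\tau|$. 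Passing to $w=v-u$ and exploiting the symmetries $w\leftrightarrow-w$ and $u\leftrightarrow v$ recombines these three contributions into a single integral of the form $\int_0^1\langle(H_\varepsilon\gamma)(x),h(x)\rangle\,dx$.

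The key step in identifying the bracket $\bigl(2 P^\perp_{\dot\gamma(x)}(\gamma(x+w)-\gamma(x))/|\gamma(x+w)-\gamma(x)|^{2}-\ddot\gamma(x)\bigr)$ is the following cancellation. Reparametrization invariance of $\E$ forces the gradient to be normal to $\dot\gamma$, so the tangential contributions from the $|\gamma'|$-variation combined with one integration by parts in $u$ on the $\D$-variation must cancel the tangential part of the first term; the net effect is to project the Euclidean difference $\gamma(x+w)-\gamma(x)$ onto the normal plane at $\gamma(x)$. The counter-term $-\ddot\gamma(x)$ then emerges from the remaining piece of the $\D$-variation after integration by parts and an expansion in $w$, and is precisely what is needed to render the resulting principal-value integral convergent.

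Finally, to pass $\varepsilon\downarrow0$ one uses the crucial cancellation
\begin{align*}
2\frac{P^\perp_{\dot\gamma(x)}(\gamma(x+w)-\gamma(x))}{|\gamma(x+w)-\gamma(x)|^{2}}-\ddot\gamma(x)=O(|w|)\qquad\text{as }w\to 0,
\end{align*}
which follows from $\gamma(x+w)-\gamma(x)=w\dot\gamma(x)+\tfrac{w^{2}}{2}\ddot\gamma(x)+o(w^{2})$ together with $\ddot\gamma\perp\dot\gamma$ for arc-length parametrization. Coupled with the $w\leftrightarrow-w$ symmetry, which annihilates the remaining odd-order residue, the limit exists as a principal value. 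The main obstacle is to justify all of this for $\gamma$ merely in $H^{2}$: the pointwise Taylor expansion above is not literally available, so both the term-by-term differentiation and the $\varepsilon\downarrow0$ limit must be carried out via Sobolev estimates, or alternatively one establishes the formula first for $C^\infty$ curves and then uses density together with continuity of $\delta\E$ in the $H^{2}$ topology; in either case one must verify that the boundary terms produced by the integration by parts at $|w|=\varepsilon$ vanish in the limit.
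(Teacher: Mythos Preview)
The paper does not actually prove this theorem: it is stated with the attribution ``Then we get by \cite[Lemma 6.4]{FHW94} and \cite[Theorem 2.24]{R12} the following'' and no further argument is given. Your proposal is a sketch of precisely the computation carried out in those references---regularize by excising $|u-v|<\varepsilon$, differentiate the three factors (Euclidean distance, intrinsic distance, speed), symmetrize in $w\leftrightarrow -w$, integrate by parts, and pass to the limit---so in spirit you are reproducing the cited proofs rather than offering an alternative.

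One cautionary remark: your appeal to reparametrization invariance (``forces the gradient to be normal, so the tangential contributions \ldots\ must cancel'') is a correct heuristic but not a substitute for the computation. In the actual derivation (see \cite[Theorem 2.24]{R12}) the tangential cancellation is verified explicitly; invoking the invariance a priori tells you only that the final answer is normal, not which terms cancel against which along the way. Similarly, the emergence of the $-\ddot\gamma(x)$ counter-term is not quite ``an expansion in $w$'' but comes directly from integrating by parts the $\langle\dot\gamma,\dot h\rangle$ terms against $1/w^{2}$. These are refinements of execution rather than gaps in strategy; your outline is sound.
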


Since $P^\perp_{\dot{\gamma}(x)}(\dot{\gamma}(x)) = 0$, $\langle \dot{\gamma}(x),\ddot{\gamma}(x) \rangle_{\R^n} = 0$ and by the linearity of $P^\perp_{\dot{\gamma}(x)}$ for all $x\in \RZ$, one sees that 
\begin{align}\label{formula:H}
(H\gamma) (x) = (P^\perp_{\dot{\gamma}}\widetilde{H} \gamma) (x) = P^\perp_{\dot{\gamma}(x)} ((\widetilde{H} \gamma) (x))
\end{align}
where 
\begin{align*}
(\widetilde{H} \gamma) (x) := 2 \lim_{\varepsilon \downarrow 0} \int_{|w| \in [\varepsilon, \frac{1}{2}]} \left(2 \frac{\gamma(x+w)-\gamma(x)-w\dot{\gamma}(x)}{|\gamma(x+w)-\gamma(x)|^2} - \ddot{\gamma}(x) \right) \frac{\mathrm{d} w}{|\gamma(x+w)-\gamma(x)|^2}.
\end{align*}

Now, motivated by \cite{H00} and \cite{R12} as explained above, simply by adding and subtracting the terms $\frac{4(\gamma(x+w)-\gamma(x)-w\dot{\gamma}(x))}{w^4}$ and $\frac{2\ddot{\gamma}(x)}{w^2}$, we may decompose $\widetilde{H}$ into three functionals $Q, R_1, R_2$ given point-wise for all $x\in [0,1]$ by 
\begin{align*}
	Q\gamma (x) = \lim_{\varepsilon \downarrow 0}Q^\varepsilon\gamma(x), \
	R_1 \gamma (x)= \lim_{\varepsilon \downarrow 0}R_1^\varepsilon\gamma(x), \
	R_2 \gamma (x) = \lim_{\varepsilon \downarrow 0}R_2^\varepsilon\gamma(x),
\end{align*}
where for any $0<\varepsilon\leq \frac 12$
\begin{align*}
(Q^\varepsilon\gamma)(x) &:= 2  \int_{|w| \in [\varepsilon, \frac{1}{2}]} \left(2 \frac{\gamma(x+w)-\gamma(x)-w\dot{\gamma}(x)}{w^2} - \ddot{\gamma}(x) \right) \frac{\mathrm{d} w}{w^2}, \\
(R_1^\varepsilon \gamma)(x)&:= 4 \int_{|w| \in [\varepsilon, \frac{1}{2}]} \left( \frac{1}{|\gamma(x+w)-\gamma(x)|^4} - \frac{1}{w^4} \right) (\gamma(x+w)-\gamma(x)-w\dot{\gamma}(x)) \mathrm{d} w ,\\
(R_2^\varepsilon \gamma)(x)&:= -2  \int_{|w| \in [\varepsilon, \frac{1}{2}]} \left( \frac{1}{|\gamma(x+w)-\gamma(x)|^2} - \frac{1}{w^2} \right) \ddot{\gamma}(x) \mathrm{d} w,
\end{align*}
such that
\begin{align}\label{formula:Htilde}
(\widetilde{H}\gamma)(x) = (Q\gamma)(x) + (R_1 \gamma)(x) + (R_2 \gamma)(x)
\end{align}
for all $x\in \RZ$.\\
In the upcoming section we will see that $Q$ contains the highest order part of the first variation of $\E$. We will see that the tangential part of $Q$ is the most difficult part to get under control, whereas $R_1$ and $R_2$ are of lower order and can be estimated more elementary.

\section{Estimate of the main term $Q$ }\label{sec:maintermQ}

Let
$$
 \lambda_n := \frac 2 3 \int_0^{n\pi} \frac 1 t \left( 1-  \frac t {n\pi}\right)^3 \sin(t) \mathrm{d}t.
$$
The following theorem is an immediate consequence of \cite[Lemma 2.3]{H00}.

\begin{thm}\label{thm:QFourier}
	For every $f \in C^\infty(\RZ, \R^n)$ the expression $Qf$ is a $C^\infty$ function whose Fourier coefficients are given by
	\begin{align*}
	\widehat{Qf}(k) = \frac {\pi^3}2 \lambda_k |k|^3 \hat{f} (k) && \forall k\in \Z.
	\end{align*}
	The constants $\lambda_k$ satisfy $0< \lambda_k = \frac \pi 3 + O(\frac 1 k)$ as $k \rightarrow \infty$.
\end{thm}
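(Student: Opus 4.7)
The approach is to diagonalize $Q$ in the Fourier basis. Since $Q^\varepsilon$ is linear and translation invariant, each pure exponential $e_k(x) := e^{2\pi i k x}$ is an eigenvector: $Q^\varepsilon e_k = \mu_k^\varepsilon e_k$ for some scalar $\mu_k^\varepsilon \in \C$. For $f \in C^\infty(\RZ,\R^n)$ the Fourier series of $f$ and of all its derivatives converges absolutely and uniformly; together with a uniform-in-$\varepsilon$ polynomial bound $|\mu_k^\varepsilon| \leq C(1+|k|)^3$ (which follows by splitting the integration at $w = 1/(2\pi|k|)$ and using $|\cos s - 1 + s^2/2| \leq C\min(s^2,s^4)$), this will let me interchange $Q^\varepsilon$ with the Fourier sum and then pass $\varepsilon \downarrow 0$ termwise via dominated convergence, yielding $\widehat{Qf}(k) = \mu_k \hat f(k)$ with $\mu_k := \lim_{\varepsilon \downarrow 0} \mu_k^\varepsilon$.

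The heart of the proof is thus the explicit evaluation of $\mu_k$. Plugging $f = e_k$ into the definition of $Q^\varepsilon$ and exploiting the $w \leftrightarrow -w$ symmetry of the integration domain $\{|w| \in [\varepsilon,1/2]\}$ kills the odd terms $2\pi i k w$ and the imaginary part of $e^{2\pi i k w}$, leaving
\[
\mu_k^\varepsilon \,=\, 8 \int_\varepsilon^{1/2} \frac{\cos(2\pi k w) - 1 + 2\pi^2 k^2 w^2}{w^4} \, \mathrm{d}w.
\]
A fourth-order Taylor expansion of the cosine shows the integrand extends continuously to $w = 0$, so $\mu_k^\varepsilon \to \mu_k$ as $\varepsilon \downarrow 0$. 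The substitution $t = 2\pi|k|w$ then pulls out the factor $|k|^3$ and transforms the upper limit to $|k|\pi$, reducing the desired identity $\mu_k = \frac{\pi^3}{2}\lambda_k |k|^3$ to an integral identity linking $\int_0^{n\pi}(\cos t - 1 + t^2/2)/t^4 \, \mathrm{d}t$ with $\int_0^{n\pi} t^{-1}(1 - t/(n\pi))^3 \sin t \, \mathrm{d}t$. This identity is produced by three successive integrations by parts, each time differentiating the numerator and integrating a negative power of $t$: the boundary contributions at $0$ vanish thanks to the fourth-order zero of the numerator, while the successive boundary contributions at the upper endpoint assemble into the factor $(1 - t/(n\pi))^3$. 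This is precisely the content of He's \cite[Lemma 2.3]{H00}, which I would cite to avoid redoing the bookkeeping of constants and boundary terms by hand, and is the one genuinely delicate step in the whole argument.

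The smoothness claim $Qf \in C^\infty$ then follows because $|\mu_k| = O(|k|^3)$ preserves the rapid decay of $\hat f(k)$, keeping $\widehat{Qf}(k)$ rapidly decreasing. The asymptotic $\lambda_k = \frac{\pi}{3} + O(\frac{1}{k})$ is extracted by expanding $(1 - t/(n\pi))^3 = 1 - 3t/(n\pi) + 3t^2/(n\pi)^2 - t^3/(n\pi)^3$: the constant term gives $\frac{2}{3}\int_0^{n\pi}\sin(t)/t \, \mathrm{d}t \to \frac{2}{3} \cdot \frac{\pi}{2} = \frac{\pi}{3}$ via the classical Dirichlet integral (with tail $\int_{n\pi}^\infty \sin(t)/t \, \mathrm{d}t = O(1/n)$ by one integration by parts), while each of the three polynomial corrections is $O(1/n)$ by the elementary evaluations $\int_0^{n\pi} t^{j}\sin t \, \mathrm{d}t = O(n^{j})$. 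Positivity $\lambda_k > 0$ is cleanest in the pre-integration-by-parts form of $\mu_k$: the auxiliary function $h(s) := \cos s - 1 + s^2/2$ satisfies $h(0) = 0$ and $h'(s) = s - \sin s \geq 0$ for $s \geq 0$, so $h \geq 0$ on $\R$ by evenness, yielding $\mu_k > 0$ for $k \neq 0$ and hence $\lambda_k > 0$.
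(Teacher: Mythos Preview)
Your proof is correct, and it takes a somewhat different (in fact more direct) route than the paper's own argument. The paper does not compute the Fourier multiplier by plugging $e_k$ into $Q^\varepsilon$; instead it introduces the bilinear form
\[
\widetilde Q(f,g)=\lim_{\varepsilon\downarrow 0}\int_{\RZ}\int_{|w|\in[\varepsilon,\frac12]}\Bigl(\frac{\langle f(x+w)-f(x),g(x+w)-g(x)\rangle}{w^4}-\frac{\langle f'(x),g'(x)\rangle}{w^2}\Bigr)\,\mathrm dw\,\mathrm dx,
\]
shows by discrete integration by parts that $\widetilde Q(f,g)=\int\langle Qf,g\rangle$, then invokes He's Lemma~2.3 for the Fourier expansion of $\widetilde Q$ and reads off $\widehat{Qf}(k)$ via Plancherel. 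Smoothness is obtained separately from a third-order Taylor expansion of the integrand, not from the multiplier bound.

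Your direct eigenvalue computation of $\mu_k^\varepsilon$ bypasses the bilinear form entirely and yields the same multiplier after the substitution $t=2\pi|k|w$; the reduction to \cite[Lemma 2.3]{H00} is then literally the threefold integration by parts you describe. What your approach buys is self-containment: you actually argue positivity (via $h(s)=\cos s-1+s^2/2\ge 0$) and the asymptotic $\lambda_k=\frac{\pi}{3}+O(1/k)$ (via the Dirichlet integral and the bounds $\int_0^{n\pi}t^j\sin t\,\mathrm dt=O(n^j)$), whereas the paper simply defers both to He. The paper's route, on the other hand, connects $Q$ to the symmetric bilinear energy form $\widetilde Q$, which is the object He originally analyzed and which reappears elsewhere in the regularity theory; that structural link is lost in the pure multiplier computation.
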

\begin{proof}
Using Taylor's expansion up to third order we get  
\begin{align}\label{eq:Qcont}
\begin{split}
Q\gamma (x) & = \lim_{\varepsilon\downarrow 0} 2  \int_{|w| \in [\varepsilon, \frac{1}{2}]}  \frac{\int_0^1 (1-t)^2 \dddot{\gamma}(x+tw) \mathrm{d}t }{w} \mathrm{d} w \\
& = \lim_{\varepsilon\downarrow 0} 2  \int_{|w| \in [\varepsilon, \frac{1}{2}]}  \frac{\int_0^1 (1-t)^2 \left(\dddot{\gamma}(x+tw) - \dddot{\gamma}(x)\right) \mathrm{d}t }{w} \mathrm{d} w ,
\end{split}
\end{align}
from which one immediately gets that $Q$ maps $C^{3,\alpha}$ to $L^\infty$, $0 < \alpha \leq 1$. As $Q$ is linear and hence $\partial^l Q \gamma (x) = Q\partial^l \gamma(x)$, $Q$ also maps $C^{l+3,\alpha}$ to $C^{l-1,1}$, $l\in\N_0$. This proves the first claim. 
For $f,g \in H^{\frac 32}(\mathbb R / \mathbb Z, \mathbb R ^n)$ we define the bilinear functional
\begin{equation*}
\widetilde Q (f,g) := \lim_{\varepsilon  \downarrow 0 } \int_{\mathbb R / \mathbb Z} \int_{|w| \in [\varepsilon, \frac 1 2]} \left( \frac {\langle f(x+w)-f(x),g(x+w)-g(x)\rangle }{w^4} - \frac {\langle f'(x) , g'(x) \rangle}{w^2} \right) \mathrm{d}w \mathrm{d}x.
\end{equation*}
Using partial integration and a discrete version thereof, we see that
\begin{align}\label{formula:Qtildecalc}
\begin{split}
\widetilde Q (f,g) & = \lim_{\varepsilon  \downarrow 0 } \int_{\mathbb R / \mathbb Z} \int_{|w| \in [\varepsilon, \frac 1 2]} \left( \frac {\langle f(x+w)-f(x),g(x+w)-g(x)\rangle }{w^4} - \frac {\langle f'(x) , g'(x) \rangle}{w^2} \right) \mathrm{d}w \mathrm{d}x \\
& = \lim_{\varepsilon  \downarrow 0 } \int_{\mathbb R / \mathbb Z} \int_{|w| \in [\varepsilon, \frac 1 2]} \left( -2 \frac {\langle f(x+w)-f(x),g(x)\rangle }{w^4} - \frac {\langle f'(x) , g'(x) \rangle}{w^2} \right) \mathrm{d}w \mathrm{d}x \\
& = \lim_{\varepsilon \downarrow 0} \int_{\mathbb R / \mathbb Z} \langle Q_{\varepsilon} f, g \rangle \mathrm{d}x 
= \int_{\mathbb R / \mathbb Z} \langle Qf ,g \rangle \mathrm{d}x  
\end{split}
\end{align}
since $Q^{\varepsilon} f$ converges to $Qf$ in $L^\infty(\RZ,\R^n)$ by (\ref{eq:Qcont}). 

From \cite[Lemma 2.3]{H00} we know that
\begin{align*}
 \widetilde Q(f,g) = \frac {\pi^3}2 \sum_{k \in \mathbb Z} \lambda_k |k|^3 \hat{f} (k) \hat {g}(k)
\end{align*}
and Plancherel's identity yields
\begin{align*}
  \int_{\mathbb R / \mathbb Z} \langle Qf ,g \rangle dx  = \sum_{k \in \mathbb Z} \widehat{Qf}(k) \widehat{g}(k).
\end{align*}
Thus comparing the Fourier coefficients in (\ref{formula:Qtildecalc}) gives
$\widehat{Qf}(k) = \frac {\pi^3}2 \lambda_k|k|^3 \hat{f} (k) $ for all $ k\in \Z.$

\end{proof}

All we need in the following is
\begin{cor} [of Theorem \ref{thm:QFourier}]\label{kor:Ql}
	Let $l\in\N_0$ and $m\geq 0$. There exists a positive constant $ \widetilde{C} < \infty $, which is independent of $l$, such that 
	\begin{align*}
	\|\partial^{l+3}\gamma \|_{H^m} \leq \widetilde{C} \|\partial^{l} Q\gamma\|_{H^m}
	\end{align*}
	for every $\gamma \in C^\infty(\RZ, \R^n)$.
\end{cor}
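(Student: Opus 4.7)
The plan is to work entirely on the Fourier side, where both $\partial^l$ and $Q$ are multiplier operators with explicit symbols. By Theorem \ref{thm:QFourier}, $\widehat{Q\gamma}(k) = \tfrac{\pi^3}{2}\lambda_k |k|^3\hat{\gamma}(k)$, and since differentiation multiplies the $k$-th Fourier coefficient by $2\pi ik$, we have
\begin{align*}
  \widehat{\partial^{l+3}\gamma}(k) &= (2\pi ik)^{l+3}\,\hat{\gamma}(k), &
  \widehat{\partial^l Q\gamma}(k) &= (2\pi ik)^l\cdot\tfrac{\pi^3}{2}\lambda_k|k|^3\,\hat{\gamma}(k).
\end{align*}
At $k=0$ both coefficients vanish. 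For $k\neq 0$, the factor $(2\pi ik)^l$ cancels in the ratio, leaving modulus $(2\pi)^3/(\tfrac{\pi^3}{2}\lambda_k) = 16/\lambda_k$, which is independent of $l$ and depends on $k$ only through $\lambda_k$.

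The only analytic input is the uniform positivity of $\lambda_k$. By Theorem \ref{thm:QFourier}, $\lambda_k>0$ for every $k\neq 0$ and $\lambda_k \to \pi/3$ as $|k|\to\infty$, so the tail is bounded below by, say, $\pi/6$, while the finitely many remaining values are bounded below by their minimum; hence $\lambda_\ast:=\inf_{k\neq 0}\lambda_k>0$. Setting $\widetilde{C}:=16/\lambda_\ast$, we obtain the pointwise Fourier bound $|\widehat{\partial^{l+3}\gamma}(k)|\leq\widetilde{C}\,|\widehat{\partial^l Q\gamma}(k)|$ for every $k\in\Z$, with $\widetilde{C}$ depending on neither $l$, $m$, nor $\gamma$.

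Finally, multiplying this inequality by $(1+k^2)^{m/2}$, squaring, summing over $k\in\Z$, and reading off the definition of the $H^m$ norm from Section \ref{sec:fractionalsobolevspaces} immediately yields $\|\partial^{l+3}\gamma\|_{H^m} \leq \widetilde{C}\,\|\partial^l Q\gamma\|_{H^m}$. I do not anticipate any genuine obstacle: the corollary is essentially a bookkeeping consequence of the explicit Fourier-multiplier description of $Q$, and the $l$-independence of $\widetilde{C}$ is automatic from the cancellation of the $(2\pi ik)^l$ factor in the ratio of symbols. The only point worth naming is the uniform lower bound on $\{\lambda_k\}_{k\neq 0}$, which is handled in a single line using the asymptotics from Theorem \ref{thm:QFourier}.
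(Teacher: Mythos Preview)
Your proof is correct and follows essentially the same approach as the paper: both work on the Fourier side, use the multiplier formula $\widehat{Q\gamma}(k)=\tfrac{\pi^3}{2}\lambda_k|k|^3\hat\gamma(k)$ from Theorem~\ref{thm:QFourier}, cancel the factor $(2\pi i k)^l$, and invoke $\inf_{k\neq 0}\lambda_k>0$ to obtain the same constant $\widetilde C = 16/\inf_{k\neq 0}\lambda_k$. Your handling of the case $k=0$ and your justification of the positive lower bound on $\lambda_k$ are slightly more explicit than the paper's, but the arguments are otherwise identical.
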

\begin{proof}
	By Theorem \ref{thm:QFourier} we can rewrite the Fourier coefficients of $Q$ as 
	\begin{align}\label{formula:Qktilde}
	\widehat{Q\gamma}(k) = \frac{\pi^3}{2} \lambda_k |k|^3 \hat{\gamma} (k) 
	\end{align}
	for all $ k\in \Z$ where $\lambda_k = \frac {\pi} 3 + O(\frac 1k)$ and $\lambda_k >0$ for all $k \not=0.$
	We obtain by (\ref{formula:Qktilde}), the definition of the Sobolev-norm and elementary properties of Fourier coefficients using the positive constant $\widetilde{C}:= \inf_{k\in\Z\setminus{0}} \{\lambda_k^2 2^{-8} \}^{-\frac{1}{2}} < \infty$
	\begin{align*}
	\|\partial^{l}Q\gamma \|_{H^m}^2 &= \sum_{k\in \Z} (1+|k|^2)^m|\widehat{\partial^{l}Q\gamma}(k)|^2 \\
	&= \sum_{k\in \Z} (1+|k|^2)^m (2\pi| k|)^{2l} |\widehat{Q\gamma}(k)|^2 \\
	&= \sum_{k\in \Z} (1+|k|^2)^m (2\pi| k|)^{2l} \frac{\pi^6}{2^2}\lambda_k^2 |k|^6 |\hat{\gamma} (k)|^2\\
	&\geq \inf_{k\in\Z} \{\lambda_k^2 2^{-8} \} \sum_{k \in \Z} (1+|k|^2)^m (2\pi| k|)^{2(l+3)}   |\hat{\gamma} (k)|^2 \\
	&= \widetilde{C}^{-2} \sum_{k \in \Z} (1+|k|^2)^m |\widehat{\partial^{(l+3)}\gamma} (k)|^2 \\
	&= \widetilde{C}^{-2}   \|\partial^{(l+3)}\gamma \|_{H^m}^2,
	\end{align*}
	which proves the statement. 
\end{proof}

\section{Form of $P^T_{\gamma} Q\gamma$ and the bilinear Hilbert transform}\label{sec:bilinearHilberttransform}

Recall that in the first variation of the Möbius energy $\E$ there appears the functional $H$ given by $H\gamma = P^\perp_{\gamma}(\widetilde{H} \gamma)$ where $\widetilde{H}\gamma = Q\gamma+ R_1 \gamma+ R_2 \gamma$ for every closed, simple, and by arc-length parametrized curve $\gamma \in C^\infty(\RZ,\R^n)$. In this section we will see that a type of bilinear Hilbert transform can help to estimate the tangential part of $Q\gamma$, i.e.
\begin{align*}
	(P^T_{\gamma} Q\gamma)(x) :=& P^T_{\gamma(x)} (Q\gamma(x)) = \langle Q\gamma(x), \dot{\gamma}(x) \rangle_{\R^n} \dot{\gamma}(x) \\
	=& \lim_{\varepsilon \downarrow 0}\  \langle Q^\varepsilon\gamma(x), \dot{\gamma}(x) \rangle_{\R^n} \dot{\gamma}(x) = \lim_{\varepsilon \downarrow 0} (P^T_{\gamma} Q^\varepsilon\gamma)(x).
\end{align*} 
In order to avoid problems coming from the singularities of the integrand we will mainly work with the truncated functional $Q^\varepsilon$, $0 < \varepsilon \leq \frac 12$. We use Taylor's approximation up to second order with remainder term in integral from, $\left< \dot{\gamma}(x), \ddot{\gamma}(x) \right> = 0$ for all $x\in \RZ$ coming from the arc-length parametrization of $\gamma$, together with the bilinearity of the scalar product to reach
\begin{align}\label{formula:PTQepsilon}
\begin{split}
\langle Q^\varepsilon\gamma(x), \dot{\gamma}(x) \rangle_{\R^n} & = 2 \int_{|w|\in [\varepsilon, \frac{1}{2}]} \left< 2 \frac{\gamma(x+w)-\gamma(x)-w\dot{\gamma}(x)}{w^2} - \ddot{\gamma}(x) , \dot{\gamma}(x) \right>_{\R^n}  \frac{\mathrm{d}w}{w^2} \\
& = 4  \int_{|w|\in [\varepsilon, \frac{1}{2}]} \int_0^1 (1-t) \left< \frac{\ddot{\gamma}(x+tw) - \ddot{\gamma}(x)}{w^2}, \dot{\gamma}(x) \right>_{\R^n}  \mathrm{d}t \mathrm{d}w \\
&  = 4  \int_{|w|\in [\varepsilon, \frac{1}{2}]} \int_0^1 (1-t)  \frac{\left<\ddot{\gamma}(x+tw), \dot{\gamma}(x) \right>_{\R^n}}{w^2}  \mathrm{d}t \mathrm{d}w \\
& = 4  \int_{|w|\in [\varepsilon, \frac{1}{2}]} \int_0^1 (1-t)  \frac{\left<\ddot{\gamma}(x+tw), \dot{\gamma}(x) - \dot{\gamma}(x+tw)\right>_{\R^n}}{w^2}  \mathrm{d}t \mathrm{d}w \\
& = 4 \int_{|w|\in [\varepsilon, \frac{1}{2}]} \int_0^1  \int_0^1 (1-t) (-t) \frac{\left<\ddot{\gamma}(x+tw), \ddot{\gamma}(x+stw)\right>_{\R^n}}{w}  \mathrm{d}s \mathrm{d}t \mathrm{d}w.
\end{split}
\end{align} 

With $\varepsilon\downarrow 0$ we also get
\begin{align} \label{formula:compTQ} 
(P^T_{\gamma} Q\gamma)(x) := 4 \lim_{\varepsilon \downarrow 0} \int_{|w|\in [\varepsilon, \frac{1}{2}]} \iint_{[0,1 ]^2} (1-t) (-t) \frac{\left<\ddot{\gamma}(x+tw), \ddot{\gamma}(x+stw)\right>_{\R^n}}{w} \dot{\gamma}(x) \mathrm{d}s \mathrm{d}t \mathrm{d}w.
\end{align}
The last terms of (\ref{formula:PTQepsilon}) and  (\ref{formula:compTQ}) motivate us to introduce the following


\begin{defn}
	Let $s_1,s_2 \in [0,1]$. We define the truncated bilinear Hilbert transform on $\RZ$ for an arbitrary $\frac{1}{2} \geq \varepsilon > 0$ by 
	\begin{align*}
	\begin{split}
	& H_{s_1,s_2}^\varepsilon: C^1 (\RZ,\R) \times C^1 (\RZ,\R) \rightarrow L^\infty (\RZ,\R), \\
	& H_{s_1,s_2}^\varepsilon(f,g) (x) := \int_{I\varepsilon} \frac{f(x+s_1w)  g(x+s_2w)}{w} \mathrm{d}w
	\end{split}
	\end{align*} 
	and the bilinear Hilbert transform by 
	\begin{align}\label{defn:Hs1s2}
	\begin{split}
	& H_{s_1,s_2}: C^1 (\RZ,\R) \times C^1 (\RZ,\R) \rightarrow L^\infty (\RZ,\R), \\
	& H_{s_1,s_2}(f,g) (x) := \lim_{\varepsilon \downarrow 0}  H_{s_1,s_2}^{\varepsilon}(f,g)(x).
	\end{split}
	\end{align}
	for all $x\in \RZ$.
\end{defn}

It is straight forward to prove that the bilinear Hilbert transform is well-defined.
 
\begin{lem}\label{lem:Hs1s2cont}
	For every $s_1,s_2 \in [0,1]$ the bilinear Hilbert transform $H_{s_1,s_2}$ as well as the truncated bilinear Hilbert transform $H_{s_1,s_2}^\varepsilon$, $0 < \varepsilon \leq \frac 12$, are indeed bilinear, well-defined and continuous on $C^1 (\RZ,\R)\times C^1 (\RZ,\R)$  to $L^\infty (\RZ,\R)$.  
\end{lem}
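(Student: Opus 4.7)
The plan is to verify the three claims (bilinearity, well-definedness, continuity) in the order stated, treating the truncated version first (which is elementary) and then passing to the limit via a principal value cancellation.

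Bilinearity of $H_{s_1,s_2}^\varepsilon$ is immediate from the linearity of the integral and of translation, and bilinearity of $H_{s_1,s_2}$ then follows by passing to the limit. For fixed $0<\varepsilon\le\tfrac12$ the integrand $w\mapsto f(x+s_1 w)g(x+s_2 w)/w$ is continuous and bounded on $\{|w|\in[\varepsilon,\tfrac12]\}$, so $H_{s_1,s_2}^\varepsilon(f,g)(x)$ is well-defined for every $x$, depends continuously on $x$, and satisfies the crude bound
\begin{align*}
\|H_{s_1,s_2}^\varepsilon(f,g)\|_{L^\infty}\le 2\log\!\tfrac{1}{2\varepsilon}\,\|f\|_{L^\infty}\|g\|_{L^\infty},
\end{align*}
which already gives continuity of $H_{s_1,s_2}^\varepsilon$ as a bilinear map $C^1\times C^1\to L^\infty$.

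The key step is the limit $\varepsilon\downarrow 0$ defining $H_{s_1,s_2}$. The crucial observation is the oddness cancellation
\begin{align*}
\int_{|w|\in[\varepsilon,\frac12]}\frac{\mathrm{d}w}{w}=\log\!\tfrac{1}{2\varepsilon}+\log(2\varepsilon)=0,
\end{align*}
so that for every $x\in\RZ$
\begin{align*}
H_{s_1,s_2}^\varepsilon(f,g)(x)=\int_{|w|\in[\varepsilon,\frac12]}\frac{f(x+s_1 w)g(x+s_2 w)-f(x)g(x)}{w}\,\mathrm{d}w.
\end{align*}
Splitting the numerator as $[f(x+s_1 w)-f(x)]\,g(x+s_2 w)+f(x)[g(x+s_2 w)-g(x)]$ and using the mean value estimates $|f(x+s_1 w)-f(x)|\le s_1|w|\,\|f'\|_{L^\infty}$, $|g(x+s_2 w)-g(x)|\le s_2|w|\,\|g'\|_{L^\infty}$, the modified integrand is pointwise bounded by $2\|f\|_{C^1}\|g\|_{C^1}$, uniformly in $x$ and in $\varepsilon\in(0,\tfrac12]$.

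With this uniform, $\varepsilon$-independent bound the dominated convergence theorem yields existence of $\lim_{\varepsilon\downarrow 0}H_{s_1,s_2}^\varepsilon(f,g)(x)$ for each $x$, so $H_{s_1,s_2}(f,g)$ is well-defined, and simultaneously delivers
\begin{align*}
\|H_{s_1,s_2}(f,g)\|_{L^\infty}\le 2\|f\|_{C^1}\|g\|_{C^1},
\end{align*}
which by bilinearity is equivalent to continuity as a map $C^1(\RZ,\R)\times C^1(\RZ,\R)\to L^\infty(\RZ,\R)$. The only substantive point is the principal value cancellation above; once that is in hand, everything else is soft, so I expect no real obstacle beyond correctly organising the mean value estimate to extract a $C^1$-bound independent of the truncation parameter.
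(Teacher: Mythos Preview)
Your proof is correct and follows essentially the same approach as the paper: subtract the constant $f(x)g(x)/w$ using the oddness cancellation, split the difference $f(x+s_1w)g(x+s_2w)-f(x)g(x)$ telescopically, and apply the $C^1$ mean value estimate to obtain an $\varepsilon$-independent $L^\infty$ bound of the form $C\|f\|_{C^1}\|g\|_{C^1}$. The only cosmetic differences are that you first give a separate crude $\varepsilon$-dependent bound for $H_{s_1,s_2}^\varepsilon$ and explicitly invoke dominated convergence for the limit, whereas the paper carries out the refined estimate directly for $H_{s_1,s_2}$ and then remarks that the same computation covers the truncated case.
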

\begin{proof}
	Let $s_1,s_2 \in [0,1]$. It is easy to see due to the linearity of the integral in (\ref{defn:Hs1s2}) that $H_{s_1,s_2}$ is indeed linear in both components. $H_{s_1,s_2}$ is also well-defined for every $f,g \in C^1 (\RZ,\R)$, which we can see by inserting two times a zero and by Lipschitz-continuity of continuously differentiable functions on a compact set such that
	\begin{align}\label{lem:Hs1s2}
	\begin{split}
	& \left|H_{s_1,s_2}(f,g)(x)\right| \\
	& = \left| \lim_{\varepsilon \downarrow 0} \int_{|w|\in [\varepsilon, \frac{1}{2}]} \frac{f(x+s_1w)  g(x+s_2w)}{w} - \frac{f(x)  g(x)}{w}  \mathrm{d}w \right| \\
	& \leq \lim_{\varepsilon \downarrow 0} \int_{|w|\in [\varepsilon, \frac{1}{2}]} \frac{|f(x+s_1w)  g(x+s_2w) - f(x)g(x+s_2w) +f(x)g(x+s_2w) -f(x)  g(x)|}{|w|}  \mathrm{d}w \\
	& \leq \lim_{\varepsilon \downarrow 0} \int_{|w|\in [\varepsilon, \frac{1}{2}]} \frac{|f(x+s_1w) - f(x)| | g(x+s_2w)| +  |f(x)| |g(x+s_2w)- g(x)|}{|w|}  \mathrm{d}w \\
	& \leq \lim_{\varepsilon \downarrow 0} \int_{|w|\in [\varepsilon, \frac{1}{2}]} \frac{\|f'\|_\infty s_1|w| \|g\|_\infty +  \|f\|_\infty \|g'\|_\infty s_2 |w|}{|w|}  \mathrm{d}w \\
	& \leq C (s_1,s_2)  \|f\|_{C^1} \|g\|_{C^1}  < \infty
	\end{split}
	\end{align}
	for all $x\in [0,1]$ and $0 < C(s_1,s_2) < \infty$. Hence we can deduce from (\ref{lem:Hs1s2}) that $H_{s_1,s_2}$ is continuous. 
	By the same arguments we get the same properties for $H_{s_1,s_2}^\epsilon$ for any $\frac{1}{2} \geq \epsilon > 0 $.
\end{proof}

The next theorem allows us to estimate the truncated bilinear Hilbert transform in a way that does not depend on the truncation parameter $\varepsilon$. It will later be used to estimate the last term of (\ref{formula:PTQepsilon}).

\begin{thm}\label{thm:BilinearHT}
	Let $m> \frac{1}{2}$ and $\frac{1}{2} \geq \varepsilon > 0$. Then there exists a positive constant $C_H < \infty$ independent of $\varepsilon$ such that 
	\begin{align}\label{formula:BilinearHT}
	\|H_{s_1,s_2}^\varepsilon(f,g)\|_{H^m} \leq C_H \|f\|_{H^m} \|g\|_{H^m}
	\end{align}
	for all $s_1,s_2 \in [0,1]$ and $f,g \in  C^\infty(\RZ,\R^n)$.
\end{thm}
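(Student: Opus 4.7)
The plan is to diagonalize the operator $H^\varepsilon_{s_1,s_2}$ in the Fourier basis. Expanding $f = \sum_k \hat f(k) e^{2\pi i k x}$ and $g = \sum_l \hat g(l) e^{2\pi i l x}$ as absolutely convergent Fourier series and interchanging sum and integral, which is legitimate for $f,g \in C^\infty$ by rapid decay of the Fourier coefficients, the operator takes the form of a bilinear Fourier multiplier
\begin{align*}
H^\varepsilon_{s_1,s_2}(f,g)(x) = \sum_{k,l \in \Z} m_\varepsilon(k,l)\,\hat f(k)\,\hat g(l)\, e^{2\pi i(k+l)x},
\end{align*}
with symbol
\begin{align*}
m_\varepsilon(k,l) = \int_{\varepsilon \leq |w| \leq 1/2} \frac{e^{2\pi i(ks_1+ls_2)w}}{w}\,\mathrm{d}w.
\end{align*}
Reading off Fourier coefficients gives $\widehat{H^\varepsilon_{s_1,s_2}(f,g)}(n) = \sum_{k+l=n} m_\varepsilon(k,l)\hat f(k)\hat g(l)$ for every $n \in \Z$.

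The heart of the argument is a uniform pointwise bound on the symbol $m_\varepsilon$. By oddness of $w \mapsto 1/w$ on the symmetric truncation domain only the imaginary part survives, so
\begin{align*}
m_\varepsilon(k,l) = 2i\int_\varepsilon^{1/2} \frac{\sin(\beta w)}{w}\,\mathrm{d}w = 2i\,\mathrm{sgn}(\beta)\bigl[\Si(|\beta|/2) - \Si(|\beta|\varepsilon)\bigr]
\end{align*}
whenever $\beta := 2\pi(ks_1+ls_2) \neq 0$, after the change of variables $u = |\beta| w$, and $m_\varepsilon(k,l) = 0$ otherwise. Since the sine integral $\Si$ is bounded on all of $\R$, we obtain a constant $C_0 < \infty$, independent of $\varepsilon \in (0, 1/2]$, $k,l \in \Z$ and $s_1,s_2 \in [0,1]$, with $|m_\varepsilon(k,l)| \leq C_0$. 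I expect this uniform symbol bound to be the main step, since it captures the cancellation that makes the principal-value structure of the operator manageable and, crucially, gives the sought-after $\varepsilon$-independence.

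Given the symbol bound, the remainder is a routine reduction to an algebra property. The pointwise estimate yields
\begin{align*}
\left|\widehat{H^\varepsilon_{s_1,s_2}(f,g)}(n)\right| \leq C_0 \sum_{k+l=n} |\hat f(k)|\,|\hat g(l)| = C_0\, \bigl(|\hat f| * |\hat g|\bigr)(n).
\end{align*}
Introduce auxiliary functions $\tilde f, \tilde g$ on $\RZ$ by $\widehat{\tilde f}(k) := |\hat f(k)|$, $\widehat{\tilde g}(k) := |\hat g(k)|$; then $\|\tilde f\|_{H^m} = \|f\|_{H^m}$, $\|\tilde g\|_{H^m} = \|g\|_{H^m}$, and $\widehat{\tilde f\,\tilde g}(n) = (|\hat f| * |\hat g|)(n)$. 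Hence
\begin{align*}
\|H^\varepsilon_{s_1,s_2}(f,g)\|_{H^m} \leq C_0 \,\|\tilde f\, \tilde g\|_{H^m}.
\end{align*}
The Banach algebra property of $H^m(\RZ)$ for $m > 1/2$, which is the standard generalisation of (\ref{prop:Banachalgebra}) and is proved via Fourier series by exploiting that $\sum_k (1+k^2)^{-m} < \infty$, then gives $\|\tilde f\, \tilde g\|_{H^m} \leq C_m \|\tilde f\|_{H^m} \|\tilde g\|_{H^m}$, so setting $C_H := C_0 C_m$ completes the proof with a constant independent of $\varepsilon$ and of $s_1,s_2$.
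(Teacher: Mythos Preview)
Your proof is correct and follows essentially the same approach as the paper: both expand in Fourier series, bound the bilinear symbol uniformly in $\varepsilon$ via the sine integral, and finish with a weighted convolution estimate. You streamline two points---you work directly with the absolutely convergent Fourier series of smooth $f,g$ rather than passing through partial sums, and you invoke the Banach algebra property of $H^m$ for $m>\tfrac12$ in one stroke, whereas the paper unpacks that same estimate explicitly via Young's inequality (Lemma~\ref{lem:Young}) and Sobolev's lemma (Lemma~\ref{lem:Sobolev}).
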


In order to prove Theorem \ref{thm:BilinearHT}, we require Young's inequality for the convolution of series and Sobolev's inequality which we state for the reader's convenience. We consider for $p \in [1,\infty)$ the sequence space 
\begin{align*}
\ell^p := \{x = (x_k)_{k\in\Z} \in \C^\Z \ | \ \|x\|_{\ell^p}:= \left(\sum_{k\in\Z} |x_k|^p\right)^{\frac{1}{p}} < \infty \},
\end{align*}
which is a Banach space. The following is well known (cf. \cite[Theorem 20.18]{HR79}).

\begin{lem}[Young's inequality]\label{lem:Young}
	Let $p,q,r \geq 1$ such that $\frac{1}{p} + \frac{1}{q}=\frac{1}{r}+1$. For $x\in \ell^p$ and $y\in  \ell^q$ we  let $(x*y)(k):= \sum_{n\in \Z} x(n) y(k-n)$ denote the convolution. Then $x*y \in \ell^r$ and
	\begin{align}\label{lem:x*y}
	\|x*y\|_{\ell^r} \leq \|x\|_{\ell^p} \|y\|_{\ell^q} 
	\end{align}
\end{lem}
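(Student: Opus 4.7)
The claim is the classical discrete Young inequality for convolutions on $\Z$; I would proceed via the standard three-factor Hölder argument. As a warm-up, note the two extreme cases. When $r = 1$, the hypothesis forces $p = q = 1$, and the triangle inequality together with Fubini on $\Z \times \Z$ immediately yields $\|x*y\|_{\ell^1} \leq \|x\|_{\ell^1}\|y\|_{\ell^1}$. When $r = \infty$ the hypothesis reduces to $\tfrac{1}{p} + \tfrac{1}{q} = 1$, so a pointwise application of Hölder for conjugate exponents gives $|(x*y)(k)| \leq \|x\|_{\ell^p}\|y\|_{\ell^q}$ for every $k \in \Z$.

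For the interior case $1 < r < \infty$, the idea is to decompose the summand as
\[
|x(n)y(k-n)| = \bigl(|x(n)|^p|y(k-n)|^q\bigr)^{1/r} \cdot |x(n)|^{1 - p/r} \cdot |y(k-n)|^{1 - q/r}
\]
and apply the three-factor Hölder inequality in the variable $n$ with exponents $r$, $\alpha := \tfrac{pr}{r-p}$ and $\beta := \tfrac{qr}{r-q}$. The key computation is $\tfrac{1}{r} + \tfrac{1}{\alpha} + \tfrac{1}{\beta} = \tfrac{1}{p} + \tfrac{1}{q} - \tfrac{1}{r}$, which equals $1$ precisely under the hypothesis $\tfrac{1}{p} + \tfrac{1}{q} = 1 + \tfrac{1}{r}$; moreover the exponents are arranged so that $(1 - p/r)\alpha = p$ and $(1 - q/r)\beta = q$. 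This produces the pointwise bound
\[
|(x*y)(k)|^r \leq \Bigl(\sum_{n\in\Z}|x(n)|^p|y(k-n)|^q\Bigr) \cdot \|x\|_{\ell^p}^{r-p} \cdot \|y\|_{\ell^q}^{r-q}.
\]
Summing in $k$ and invoking Fubini on the double sum collapses the first factor to $\|x\|_{\ell^p}^p\|y\|_{\ell^q}^q$, and combining with the remaining two factors gives $\|x*y\|_{\ell^r}^r \leq \|x\|_{\ell^p}^r\|y\|_{\ell^q}^r$, from which the claim follows by taking $r$-th roots.

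The only real obstacle here is the algebraic bookkeeping of the three Hölder exponents — that is, finding the decomposition above. Once the splitting is in hand every remaining step is a direct invocation of Hölder and Fubini, and the argument is insensitive to whether the underlying measure space is $\Z$ or $\R^d$.
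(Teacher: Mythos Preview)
Your argument is correct: the three-exponent H\"older decomposition is exactly the standard proof of Young's inequality, and the bookkeeping checks out (with the harmless caveat that the boundary cases $p=r$ or $q=r$ make one of $\alpha,\beta$ infinite, in which case the corresponding factor is constant and the argument degenerates to a two-term H\"older, still giving the bound).

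The paper, however, does not prove this lemma at all: it simply records it as ``well known'' and cites \cite[Theorem~20.18]{HR79}. So there is nothing to compare against---you have supplied a complete proof where the authors chose to outsource it to a reference.
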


We will use Young's inequality to prove the following version of the Sobolev inequality:

\begin{lem}[Sobolev's lemma]\label{lem:Sobolev}
	Let $f\in H^m(\RZ, \R^n)$ for $m> \frac{1}{2}$. Then $\hat{f}:= (\hat{f}(k))_{k\in\Z} \in \ell^1$ and there exists a positive constant $C_0 < \infty$ such that
	\begin{align*}
	\|\hat{f}\|_{\ell^1} \leq C_0 \|f\|_{H^m}.
	\end{align*}
\end{lem}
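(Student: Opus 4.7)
The plan is to use the classical Cauchy--Schwarz trick against the weight $(1+k^2)^m$ that defines the Sobolev norm, which is the quickest route and does not require Young's inequality at all (Young's inequality from Lemma \ref{lem:Young} seems to be stated here for the subsequent proof of Theorem \ref{thm:BilinearHT}, not for this lemma itself).

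Concretely, I would write
\begin{align*}
\|\hat f\|_{\ell^1} \;=\; \sum_{k\in\Z} |\hat f(k)| \;=\; \sum_{k\in\Z} (1+k^2)^{-m/2}\,(1+k^2)^{m/2}\,|\hat f(k)|
\end{align*}
and then apply the Cauchy--Schwarz inequality in $\ell^2(\Z)$ to split this sum into the product
\begin{align*}
\|\hat f\|_{\ell^1} \;\leq\; \left(\sum_{k\in\Z} (1+k^2)^{-m}\right)^{\!1/2} \left(\sum_{k\in\Z} (1+k^2)^{m} |\hat f(k)|^2\right)^{\!1/2}.
\end{align*}
The second factor is by definition $\|f\|_{H^m}$, so it only remains to observe that the first factor is a finite constant.

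For the first factor, the hypothesis $m>\tfrac12$ gives $2m>1$, so the series $\sum_{k\in\Z}(1+k^2)^{-m}$ converges (comparison with $\sum_{k\geq 1} k^{-2m}$). Setting
\begin{align*}
C_0 \;:=\; \left(\sum_{k\in\Z} (1+k^2)^{-m}\right)^{\!1/2}\;<\;\infty
\end{align*}
yields $\|\hat f\|_{\ell^1}\leq C_0 \|f\|_{H^m}$, which in particular shows $\hat f\in\ell^1$. There is no real obstacle here; the only subtle point is making sure the exponent $2m>1$ is used correctly to guarantee summability of the weight series, which is exactly where the hypothesis $m>\tfrac12$ enters.
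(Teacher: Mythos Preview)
Your proof is correct and essentially identical to the paper's own argument: the paper also inserts the weight $(1+k^2)^{m/2}(1+k^2)^{-m/2}$, applies H\"older's inequality (i.e.\ Cauchy--Schwarz), and sets $C_0=\big(\sum_{k\in\Z}(1+k^2)^{-m}\big)^{1/2}$, noting it is finite since $m>\tfrac12$. Your remark that Young's inequality is reserved for the proof of Theorem~\ref{thm:BilinearHT} rather than this lemma is also accurate.
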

\begin{proof}
	Simply by the definition of the $\ell^1$-norm and by Hölder's inequality we yield
	\begin{align*}
	\|\hat{f}\|_{\ell^1} & =   \sum_{k\in\Z} |\hat{f}(k)|  \\
	& = \sum_{k\in\Z} |\hat{f}(k)| (1+k^2)^{\frac{m}{2}} (1+k^2)^{-\frac{m}{2}} \\
	& \leq \left( \sum_{k\in\Z} |\hat{f}(k)|^2 (1+k^2)^{m}  \right)^{\frac{1}{2}} \left( \sum_{k\in\Z} \frac{1}{(1+k^2)^{m}} \right)^{\frac{1}{2}} \\
	& = C_0 \|f\|_{H^m}
	\end{align*}
	where $C_0:=\left( \sum_{k\in\Z} \frac{1}{(1+k^2)^{m}} \right)^{\frac{1}{2}}$ which is finite since $m> \frac{1}{2}$.
\end{proof}

Now we are ready to prove Theorem  \ref{thm:BilinearHT}.

\begin{proof}[Proof of Theorem \ref{thm:BilinearHT}] 
	Let $s_1,s_2 \in [0,1]$, $m > \frac{1}{2}$ and $\frac{1}{2} \geq \varepsilon > 0$. Moreover, let $f,g \in  C^\infty(\RZ,\R)\subseteq H^m(\RZ,\R)$. Due to $m > \frac{1}{2}$ and \cite[Theorem 8.2]{DNPV12}, the partial sums of the Fourier series converge uniformly to the functions $f$ and $g$, more precisely
	\begin{align}\label{formula:apprfg}
	\begin{split}
	\|f-p_n \|_\infty & \overset{n\rightarrow \infty}{\longrightarrow} 0 \textnormal{ and } \\
	\|g-q_n \|_\infty & \overset{n\rightarrow \infty}{\longrightarrow} 0
	\end{split}		
	\end{align}
	where $	p_n (x) := \sum_{k=-n}^{n} \hat{f}(k) e^{-2\pi i k x}$, $q_n (x):=\sum_{k=-n}^{n} \hat{g}(k) e^{-2\pi i k x} $ for all $x\in \RZ$ and $n\in\N_0$, and $p_n, q_n \in C^\infty (\RZ, \R)$.
	
	The strategy of the proof is to first compute and estimate the Fourier coefficient of the truncated bilinear Hilbert transform of $p_n, q_n$. Then we derive the main estimate (\ref{formula:BilinearHT}) for $p_n,q_n$, $n\in \N$, and pass to the limit $n\rightarrow \infty$ to obtain the main estimate (\ref{formula:BilinearHT}) for $f,g$.
	Let $n\in \N$. So we start by calculating the $k$-th Fourier coefficient of $H_{s_1,s_2}^\varepsilon(p_n,q_n)$, $k\in\Z$, more precisely
	\begin{align}\label{formula:HepsilonFourier}
	\widehat{H_{s_1,s_2}^\varepsilon(p_n,q_n)}(k) = \int_0^1 \int_{|w|\in [\varepsilon, \frac{1}{2}]} \frac{p_n(x+s_1w)  q_n(x+s_2w)}{w} e^{-2\pi i k x} \mathrm{d}w \mathrm{d}x.
	\end{align}
	Below we will interchange the integrals of (\ref{formula:HepsilonFourier}) two times, which is allowed since the integrand is smooth on $[-\frac 12, -\varepsilon] \cup [\varepsilon, \frac{1}{2}]$. By using that the Fourier coefficients of a product of two functions is the convolution of the Fourier coefficients of the two functions we get
	\begin{align}\label{formula:pnqnepsilonI}
	\begin{split}
	& \int_{|w|\in [\varepsilon, \frac{1}{2}]} \int_0^1  p_n(x+s_1w)  q_n(x+s_2w) e^{-2\pi i k x}  \mathrm{d}x \frac{\mathrm{d}w }{w} \\
	& = \int_{|w|\in [\varepsilon, \frac{1}{2}]}  \sum_{l\in\N} \widehat{p_n}(l) e^{2\pi ils_1 w} \widehat{q_n}(k-l) e^{2\pi i(k-l)s_2 w} \frac{\mathrm{d}w }{w} \\ 
	& = 
	\begin{cases}
	0       & \quad \text{if } |k| > 2n, \\
	\int_{|w|\in [\varepsilon, \frac{1}{2}]}  \sum_{l=-n}^{n} e^{2\pi i(ls_1+ (k-l)s_2) w} \widehat{f}(l) \widehat{g}(k-l) \frac{\mathrm{d}w }{w}  & \quad \text{if } |k| \leq 2n.
	\end{cases} 
	\end{split}
	\end{align}
	By a simple substitution we gain for $|k|\leq 2n$
	\begin{align}\label{formula:pnqnepsilonII}
	\begin{split}
	& \int_{|w|\in [\varepsilon, \frac{1}{2}]}  \sum_{l=-n}^{n} e^{2\pi i(ls_1+ (k-l)s_2) w} \widehat{f}(l) \widehat{g}(k-l) \frac{\mathrm{d}w }{w} \\ 
	& = \int_\varepsilon^{\frac{1}{2}}  \sum_{l=-n}^{n} \frac{e^{2\pi i(ls_1+ (k-l)s_2) w} - e^{2\pi i(ls_1+ (k-l)s_2) w}}{w} \widehat{f}(l) \widehat{g}(k-l) \mathrm{d}w \\
	& = \int_\varepsilon^{\frac{1}{2}}  \sum_{l=-n}^{n} \frac{e^{i w\phi_{l,k}} - e^{-i w \phi_{l,k}}}{w} \widehat{f}(l) \widehat{g}(k-l) \mathrm{d}w \\
	& = \int_\varepsilon^{\frac{1}{2}}  \sum_{l=-n}^{n}\frac{2i \sin (w \phi_{l,k})}{w} \widehat{f}(l) \widehat{g}(k-l) \mathrm{d}w
	\end{split}
	\end{align}
	where $\phi_{n,k} := 2\pi (ls_1 +(k-l)s_2) \in \R$. 
	By Interchanging integral and sum we get
	\begin{align}\label{formula:Sineintegral}
	\begin{split}
	&  \sum_{l=-n}^{n} \widehat{f}(l) \widehat{g}(k-l) 2i \int_\varepsilon^{\frac{1}{2}} \frac{ \sin (w \phi_{l,k})}{w}  \mathrm{d}w \\
	& = \sum_{l=-n}^{n} \widehat{f}(l) \widehat{g}(k-l) 2i \int_{\phi_{l,k}\varepsilon}^{\frac{\phi_{l,k}}{2}} \frac{ \sin (\widetilde{w})}{\widetilde{w}}  \mathrm{d}\widetilde{w} \\
	& =  \sum_{l=-n}^{n} \widehat{f}(l) \widehat{g}(k-l) 2i \left(\Si\left(\frac{\phi_{l,k}}{2}\right) - \Si(\phi_{l,k}\varepsilon) \right),
	\end{split}
	\end{align}
	where the sine integral is defined by $\Si (x):= \int_0^x \frac{\sin t}{t}\mathrm{d}t$. 
	Now we estimate by using (\ref{formula:pnqnepsilonI}), (\ref{formula:pnqnepsilonII}), and (\ref{formula:Sineintegral}) the k-th Fourier coefficient of the approximation of the bilinear Hilbert transform $H_{s_1,s_2}^\varepsilon$ as follows
	\begin{align}\label{formula:Hfouriercoeffpnqn}
	\begin{split}
	\left|\widehat{H_{s_1,s_2}^\varepsilon(p_n,q_n)}(k) \right| & = \left| \int_0^1 \int_{|w|\in [\varepsilon, \frac{1}{2}]} \frac{ p_n(x+s_1w)  q_n(x+s_2w) e^{-2\pi i k x}}{w}  \mathrm{d}w \mathrm{d}x \right| \\
	& =  \left|  \int_{|w|\in [\varepsilon, \frac{1}{2}]} \int_0^1  \frac{ p_n(x+s_1w)  q_n(x+s_2w) e^{-2\pi i k x}}{w}   \mathrm{d}x \mathrm{d}w \right | \\
	& \leq \int_{|w|\in [\varepsilon, \frac{1}{2}]} \int_0^1  \left| \frac{ p_n(x+s_1w)  q_n(x+s_2w) e^{-2\pi i k x}}{w} \right|   \mathrm{d}x\mathrm{d}w\\
	& \leq \sum_{l=-n}^{n} |\widehat{f}(l)| | \widehat{g}(k-l)| 2 \left|\Si\left(\frac{\phi_{l,k}}{2}\right) - \Si(\phi_{l,k}\varepsilon) \right| \\
	& \leq \sum_{l=-n}^{n} |\widehat{f}(l)| | \widehat{g}(k-l)| 2 \left(\left|\Si\left(\frac{\phi_{l,k}}{2}\right) \right| + \left| \Si(\phi_{l,k}\varepsilon) \right| \right) \\
	& \leq M \sum_{l=-n}^{n} |\widehat{f}(l)| | \widehat{g}(k-l)| \\
	& \leq M  (|p_n| * |q_n|) (k)
	\end{split}
	\end{align}
	where $M := 4 \sup_{\phi \in [0,\infty [} \Si (\phi) <\infty$. 
	
	Moreover, we can deduce from (\ref{formula:Hfouriercoeffpnqn}) using the elementary estimate
	$$
	(k^2+1)^{\frac{m}{2}} \leq 2^m \left((l^2+1)^{\frac{m}{2}} + ((k-l)^2+1)^{\frac{m}{2}} \right),
	$$
	which immediately follows from $|k| \leq 2 \min\{|l|, |k-l|\}$, that
	\begin{align}\label{formula:Hs1s2decompose}
	\begin{split}
	\left|(k^2 + 1)^{\frac{m}{2}}\widehat{H_{s_1,s_2}(p_n,q_n)}(k) \right| \leq & M \left( (k^2 + 1)^{\frac{m}{2}} (|p_n| * |q_n|) (k)\right) \\
	\leq & MC_m \sum_{l\in\N} (l^2 + 1)^{\frac{m}{2}} \left|\widehat{p_n}(l)\right| \left| \widehat{q_n}(k-l) \right|  \\
	& + MC_m \sum_{l\in\N}  \left|\widehat{p_n}(l)\right| ((k-l)^2 + 1)^{\frac{m}{2}} \left| \widehat{q_n}(k-l) \right|.
	\end{split}
	\end{align}
	This motivates us to introduce the following two series component-wise for all $k\in\Z$
	\begin{align*}
	\widetilde{p_n}(k) &:= (k^2 + 1)^{\frac{m}{2}} \left|\widehat{p_n}(k)\right| \textnormal{ and } \\
	\widetilde{q_n}(k) &:= (k^2 + 1)^{\frac{m}{2}} \left|\widehat{q_n}(k)\right|
	\end{align*}
	such that  (\ref{formula:Hs1s2decompose}) can be written as
	\begin{align*}
	\left|(k^2 + 1)^{\frac{m}{2}}\widehat{H_{s_1,s_2}^\varepsilon(p_n,q_n)}(k) \right| \leq MC_m \left( (\widetilde{p_n}*\left| \widehat{q_n}\right|) (k) +  (|\widehat{p_n}|*\widetilde{q_n} ) (k)\right).
	\end{align*}
	In the end we get the desired estimate (\ref{formula:BilinearHT}) for $p_n, q_n$ by applying the definition of the Sobolev norm, triangle inequality, Young's inequality with $1+\frac{1}{2}= \frac{1}{2} +1$ in Lemma \ref{lem:Young} and Sobolev's lemma \ref{lem:Sobolev} such that
	\begin{align}\label{formula:mainestimatepnqn}
	\begin{split}
	\|H_{s_1,s_2}^\varepsilon(p_n,q_n)\|_{H^m} & = \|( (k^2 + 1)^{\frac{m}{2}}\widehat{H_{s_1,s_2}^\varepsilon(p_n,q_n)}(k))_{k\in\Z} \|_{\ell^2} \\
	& \leq MC_m \left( \| ((\widetilde{p_n}*\left| \widehat{q_n}\right|) (k))_{k\in\Z} \|_{\ell^2}  + \|((|\widehat{p_n}|*\widetilde{q_n} ) (k))_{k\in\Z} \|_{\ell^2} \right) \\
	& \leq MC_m \left(\|\widetilde{p_n}\|_{\ell^2} \|\widehat{q_n}\|_{\ell^1} + \|\widetilde{q_n}\|_{\ell^2} \|\widehat{p_n}\|_{\ell^1} \right) \\
	& \leq MC_mC_0 \left(\|p_n\|_{H^m} \|q_n\|_{H^m} + \|p_n\|_{H^m} \|q_n\|_{H^m}\right) \\
	& = C_H \|p_n\|_{H^m} \|q_n\|_{H^m} 
	\end{split}
	\end{align}
	where $C_H:= 2MC_mC_0$ is a constant only depending on $m$, $s_1$ and $s_2$. This proves (\ref{formula:BilinearHT}) for $p_n$ and $q_n$ instead of $f$ and $g$. 
	
	Finally, we come to the last step, passing to the limit $n\rightarrow \infty$. First of all we gain for any $n\in \N$
	\begin{align}\label{formula:Hfg}
	\begin{split}
	\|H_{s_1,s_2}^\varepsilon(p_{n},q_{n})\|_{H^m} & \leq C_H \|p_{n}\|_{H^m} \|q_{n}\|_{H^m} \\
	&  \leq C_H \|f\|_{H^m} \|g\|_{H^m} 
	\end{split}
	\end{align}
	by (\ref{formula:mainestimatepnqn}) and the definition of $p_n$ and $q_n$.
	In order to pass to the limit on the left-hand side of (\ref{formula:Hfg}), we check the uniform convergence
	\begin{align*}
	0 & \leq  \|H_{s_1,s_2}^\varepsilon(f,g) - H_{s_1,s_2}^\varepsilon(p_n,q_n)\|_\infty \\
	& \leq \|H_{s_1,s_2}^\varepsilon(f,g) - H_{s_1,s_2}^\varepsilon(p_n,g)\|_\infty + \|H_{s_1,s_2}^\varepsilon(p_n,g) - H_{s_1,s_2}^\varepsilon(p_n,q_n)\|_\infty \\
	& = \|H_{s_1,s_2}^\varepsilon(f-p_n,g)\|_\infty + \|H_{s_1,s_2}^\varepsilon(p_n,g-q_n)\|_\infty \\
	& \leq C(s_1,s_2) \underbrace{\|f-p_n\|_\infty}_{\overset{n\rightarrow \infty}{\longrightarrow}0} \|g\|_\infty + C(s_1,s_2) \|p_n\|_\infty \underbrace{\|g-q_n\|_\infty}_{\overset{n\rightarrow \infty}{\longrightarrow}0}\\
	& \overset{n\rightarrow \infty}{\longrightarrow} 0
	\end{align*}
	by Lemma \ref{lem:Hs1s2cont} and (\ref{formula:apprfg}) so that we may infer by the uniqueness of the Fourier coefficients of $ H_{s_1,s_2}^\varepsilon(p_n,q_n) \in H^m (\RZ,\R) \subseteq L^2(\RZ,\R)$ that for all $k\in\Z$
	\begin{align*}
	\left|\widehat{H_{s_1,s_2}^\varepsilon(f,g)}(k)- \widehat{H_{s_1,s_2}^\varepsilon(p_n,q_n)}(k)\right| \overset{n\rightarrow \infty}{\longrightarrow} 0.
	\end{align*}
	Thus, we get for all $N\in \N$
	\begin{align*}
	\left| \left( \sum_{|k|\leq N} (1+|k|^2)^m|\widehat{H_{s_1,s_2}^\varepsilon(p_n,q_n)}(k)|^2 \right)^{\frac{1}{2}} - \left( \sum_{|k|\leq N} (1+|k|^2)^m|\widehat{H_{s_1,s_2}^\varepsilon(f,g)}(k)|^2 \right)^{\frac{1}{2}} \right| \overset{n\rightarrow \infty}{\longrightarrow} 0 ,
	\end{align*}
	which implies
	\begin{align*}
	\left( \sum_{|k|\leq N} (1+|k|^2)^m|\widehat{H_{s_1,s_2}^\varepsilon(f,g)}(k)|^2 \right)^{\frac{1}{2}} \leq C_H \|f\|_{H^m} \|g\|_{H^m} 
	\end{align*}
	by (\ref{formula:Hfg}).	Therefore, it also holds 
	\begin{align*}
	\|H_{s_1,s_2}^\varepsilon(f,g)\|_{H^m}\leq C_H \|f\|_{H^m} \|g\|_{H^m} 
	\end{align*}
	and the main estimate (\ref{formula:BilinearHT}) is proven.
\end{proof}

\section{Form of the remaining parts}\label{sec:remaining parts}
 
We aim to transform the truncated remaining parts $R_1^\varepsilon$ and $R_2^\varepsilon$ of the decomposition of the Möbius energy $\E$ in Section \ref{section:decomposition} into multiple integrals of analytic functions of several variables. This helps us to estimate the derivatives of the orthogonal projections of $R_1^\varepsilon$ and $R_2^\varepsilon$ later.

Let $\frac{1}{2} \geq\varepsilon > 0$ and $x\in [0,1]$. We start by recalling the definition of the first remaining part
\begin{align*}
(R_1 \gamma)(x)= \lim_{\varepsilon\downarrow 0} (R_1^\varepsilon \gamma)(x)
\end{align*}
where 
\begin{align*}
(R_1^\varepsilon \gamma)(x):= 4  \int_{|w|\in [\varepsilon, \frac{1}{2}]} \left( \frac{1}{|\gamma(x+w)-\gamma(x)|^4} - \frac{1}{w^4} \right) (\gamma(x+w)-\gamma(x)-w\dot{\gamma}(x)) \mathrm{d} w.
\end{align*}
Similarly,
\begin{align*}
(R_2 \gamma)(x)= \lim_{\varepsilon\downarrow 0} (R_2^\varepsilon \gamma)(x), 
\end{align*}
where
\begin{align*}
(R_2^\varepsilon \gamma)(x):= -2 \int_{|w|\in [\varepsilon, \frac{1}{2}]} \left( \frac{1}{|\gamma(x+w)-\gamma(x)|^2} - \frac{1}{w^2} \right) \ddot{\gamma}(x) \mathrm{d} w.
\end{align*}
Because the second remaining part is the more elementary one, we start by working with $P^\perp_{\dot{\gamma}} R_2^\varepsilon \gamma $ now.

\subsection{Form of $P^\perp_{\dot{\gamma}} R_2^\varepsilon \gamma $}

\begin{thm}
	We can express $P^\perp_{\dot{\gamma}} R_2^\varepsilon \gamma$ as multiple integral
	\begin{align*}
	(P^\perp_{\dot{\gamma}} R_2^\varepsilon \gamma )(x) =  \int_{|w|\in [\varepsilon, \frac{1}{2}]}  \iiiint_{[0,1]^4} (s_1-s_2)^2 G_2 (\cdots) (x) \mathrm{d} \phi_1 \mathrm{d} \phi_2 \mathrm{d} s_1 \mathrm{d} s_2 \mathrm{d} w,
	\end{align*}
	over the analytic function $G_2 : \R^n \setminus \{0\} \times \R^{3n} \rightarrow \R^n$, 
	\begin{align*}
	G_2 (\cdots) := P^\perp_{\dot{\gamma}} \widetilde{G_2} (\cdots) = \widetilde{G_2} (\cdots) - \langle \widetilde{G_2} (\cdots), \dot{\gamma}\rangle \dot{\gamma},
	\end{align*}
	where $\widetilde{G_2}: \R^n \setminus \{0\} \times \R^{3n}  \rightarrow \R^n$ defined as $\widetilde{G_2}(a, x, y, z):= -  \frac{1}{|a|^2} \left<x,y \right>_{\R^n} z$ is analytic and $\widetilde{G}_2(\cdots)$ is an abbreviation for 
	\begin{align*}
	\widetilde{G_2}\left( \int_0^1 \dot{\gamma} (\cdot+tw) \mathrm{d} t, \ddot{\gamma}(\cdot+s_2w + (s_1-s_2)\phi_1 w), \ddot{\gamma}(\cdot+s_2w + (s_1-s_2)\phi_2 w), \ddot{\gamma}(\cdot)  \right) .
	\end{align*}
\end{thm}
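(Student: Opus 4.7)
The plan is to algebraically manipulate the integrand of $R_2^\varepsilon$ so that its $\gamma$-dependence factors through the analytic map $\widetilde{G_2}$ applied to the secant average $\int_0^1 \dot{\gamma}(\cdot+tw)\,\mathrm{d}t$ and to translates of $\ddot{\gamma}$. First I rewrite the secant via the fundamental theorem of calculus as $\gamma(x+w)-\gamma(x) = w\,a(x,w)$ with $a(x,w):=\int_0^1 \dot{\gamma}(x+tw)\,\mathrm{d}t$, so that
\[
\frac{1}{|\gamma(x+w)-\gamma(x)|^2} - \frac{1}{w^2} = \frac{1-|a(x,w)|^2}{w^2\,|a(x,w)|^2}.
\]
Since $\gamma$ is simple and arc-length parametrized, the secant $|\gamma(x+w)-\gamma(x)|/|w|$ is bounded below by a positive constant on the relevant range of $w$, hence $a(x,w)$ never vanishes and $1/|a|^2$ is an analytic function of $a$.

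Next I exploit the arc-length constraint $|\dot{\gamma}|\equiv 1$. Writing $1 = \int_0^1\int_0^1 \mathrm{d}s_1\,\mathrm{d}s_2$ and applying the polarization identity $1-\langle u,v\rangle = \tfrac12|u-v|^2$, valid for unit vectors $u,v$, I obtain
\[
1 - |a(x,w)|^2 = \frac{1}{2}\iint_{[0,1]^2} |\dot{\gamma}(x+s_1 w) - \dot{\gamma}(x+s_2 w)|^2\,\mathrm{d}s_1\,\mathrm{d}s_2.
\]
Another application of the fundamental theorem of calculus gives
\[
\dot{\gamma}(x+s_1 w) - \dot{\gamma}(x+s_2 w) = (s_1-s_2)\,w \int_0^1 \ddot{\gamma}\bigl(x+s_2 w + (s_1-s_2)\phi w\bigr)\,\mathrm{d}\phi,
\]
and expanding the squared norm as a double integral in $\phi_1,\phi_2$ produces both a scalar product of two $\ddot{\gamma}$ terms and the prefactor $(s_1-s_2)^2\,w^2$. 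Combining the last two displays, the $w^2$ in the denominator of $(1-|a|^2)/(w^2|a|^2)$ cancels, leaving
\[
\frac{1}{|\gamma(x+w)-\gamma(x)|^2} - \frac{1}{w^2} = \frac{1}{2|a|^2}\iiiint_{[0,1]^4} (s_1-s_2)^2 \bigl\langle \ddot{\gamma}(x+s_2 w + (s_1-s_2)\phi_1 w), \ddot{\gamma}(x+s_2 w + (s_1-s_2)\phi_2 w)\bigr\rangle \,\mathrm{d}\phi_1\,\mathrm{d}\phi_2\,\mathrm{d}s_1\,\mathrm{d}s_2.
\]

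Finally, multiplying by $-2\,\ddot{\gamma}(x)$, integrating over $|w|\in[\varepsilon,\tfrac12]$, and recognizing $\widetilde{G_2}(a,x,y,z) = -\tfrac{1}{|a|^2}\langle x,y\rangle z$ in the integrand identifies $R_2^\varepsilon\gamma(x)$ with the five-fold integral of $\widetilde{G_2}(\cdots)$ weighted by $(s_1-s_2)^2$. Because $P^\perp_{\dot{\gamma}(x)}$ is linear and does not depend on the variables $w,s_1,s_2,\phi_1,\phi_2$, it passes under the integral signs and converts $\widetilde{G_2}$ into $G_2 = P^\perp_{\dot{\gamma}}\widetilde{G_2}$, yielding the claim. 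Analyticity of $\widetilde{G_2}$ on $(\R^n\setminus\{0\})\times\R^{3n}$ is automatic: $(x,y,z)\mapsto \langle x,y\rangle z$ is polynomial, and $a\mapsto 1/|a|^2$ is the composition of the polynomial $a\mapsto |a|^2$ with the analytic function $t\mapsto 1/t$ on $t>0$. I do not anticipate a deep obstruction here; the only care required is tracking the precise arguments of the two $\ddot{\gamma}$-factors so that the two $w^2$-factors cancel and the $(s_1-s_2)^2$ emerges exactly where the statement demands.
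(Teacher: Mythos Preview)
Your proof is correct and follows essentially the same route as the paper: rewrite the secant as $w\int_0^1\dot\gamma(\cdot+tw)\,\mathrm{d}t$, use the arc-length identity $1-\langle u,v\rangle=\tfrac12|u-v|^2$ for unit vectors to produce the double $(s_1,s_2)$-integral, apply the fundamental theorem of calculus once more to introduce $\ddot\gamma$ and the factor $(s_1-s_2)^2 w^2$, and then read off $\widetilde{G_2}$ before applying $P^\perp_{\dot\gamma}$. Your explicit remark that $a(x,w)\neq 0$ on the relevant range (via simplicity and arc-length) is a useful justification that the paper leaves implicit.
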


\begin{proof}

We begin to compute the left-hand side part of the integrand of $R_2^\varepsilon \gamma$ by the fundamental theorem of calculus, the arc-length parametrization of $\gamma$, and linearity of the scalar product as 
\begin{align}\label{formula:R2gammaint1}
\begin{split}
\frac{1}{|\gamma(x+w)-\gamma(x)|^2} - & \frac{1}{w^2}  = \frac{w^2}{|\gamma(x+w)-\gamma(x)|^2} \frac{1- \frac{|\gamma(x+w)-\gamma(x)|^2}{w^2}}{w^2} \\
& = g_2\left(\int_0^1 \dot{\gamma} (x+tw) \mathrm{d} t \right) \frac{2\left(1- \frac{|\gamma(x+w)-\gamma(x)|^2}{w^2}\right)}{w^2} \\
& = g_2\left(\int_0^1 \dot{\gamma} (x+tw) \mathrm{d} t \right) \frac{\iint_{[0,1]^2} |\dot{\gamma}(x+s_1w)-\dot{\gamma}(x+s_2w)|^2 \mathrm{d} s_1 \mathrm{d} s_2}{w^2}
\end{split}
\end{align}
where $g_2(x) = \frac{1}{2|x|^2}$ for all $x\in \R^n \setminus\{0\}$ is analytic away from the origin and furthermore,
\begin{align}\label{formula:R2gammaint2}
\begin{split}
& \frac{|\dot{\gamma}(x+s_1w)-\dot{\gamma}(x+s_2w)|^2}{w^2} = \\ 
& (s_1 - s_2)^2 \int_0^1 \int_0^1 \left< \ddot{\gamma}(x+s_2w + (s_1-s_2)\phi_1 w), \ddot{\gamma}(x+s_2w + (s_1-s_2)\phi_2 w) \right>_{\R^n} \mathrm{d} \phi_1 \mathrm{d} \phi_2.
\end{split} 
\end{align}
Therefore, we can rewrite $R_2^\varepsilon \gamma$ such that 
\begin{align*}
(R_2^\varepsilon & \gamma) (x) =  \int_{|w|\in [\varepsilon, \frac{1}{2}]}  \iiiint_{[0,1]^4} (s_1-s_2)^2 \cdot	\\
& \widetilde{G_2}\left( \int_0^1 \dot{\gamma} (x+tw) \mathrm{d} t, \ddot{\gamma}(x+s_2w + (s_1-s_2)\phi_1 w), \ddot{\gamma}(x+s_2w + (s_1-s_2)\phi_2 w), \ddot{\gamma}(x)  \right) \\
& \mathrm{d} \phi_1 \mathrm{d} \phi_2 \mathrm{d} s_1 \mathrm{d} s_2 \mathrm{d} w
\end{align*}
where $\widetilde{G_2}: \R^n \setminus \{0\} \times \R^{3n}  \rightarrow \R^n$ is defined as 
\begin{align*}
\widetilde{G_2}(a, x, y, z):= -2 g_2(a) \left<x,y \right>_{\R^n} z
\end{align*}
and is again an analytic function away from the origin in the first variable, because sums, products and compositions of analytic functions are analytic as well. 
Now by applying the orthogonal projection $P^\perp_{\dot{\gamma}} $ on $ R_2^\varepsilon \gamma$ we obtain
\begin{align*}
(P^\perp_{\dot{\gamma}} R_2^\varepsilon \gamma )(x) =  \int_{|w|\in [\varepsilon, \frac{1}{2}]}  \iiiint_{[0,1]^4} (s_1-s_2)^2 (P^\perp_{\dot{\gamma}} \widetilde{G_2} (\cdots) ) (x) \mathrm{d} \phi_1 \mathrm{d} \phi_2 \mathrm{d} s_1 \mathrm{d} s_2 \mathrm{d} w
\end{align*}
where $\widetilde{G_2} (\cdots)$ is an abbreviation for 
\begin{align*}
\widetilde{G_2}\left( \int_0^1 \dot{\gamma} (\cdot+tw) \mathrm{d} t, \ddot{\gamma}(\cdot+s_2w + (s_1-s_2)\phi_1 w), \ddot{\gamma}(\cdot+s_2w + (s_1-s_2)\phi_2 w), \ddot{\gamma}(\cdot)  \right) .
\end{align*}
Finally we can express $P^\perp_{\dot{\gamma}} R_2^\varepsilon \gamma$ as multiple integral
\begin{align*}
(P^\perp_{\dot{\gamma}} R_2^\varepsilon \gamma )(x) =  \int_{|w|\in [\varepsilon, \frac{1}{2}]}  \iiiint_{[0,1]^4} (s_1-s_2)^2 G_2 (\cdots) (x) \mathrm{d} \phi_1 \mathrm{d} \phi_2 \mathrm{d} s_1 \mathrm{d} s_2 \mathrm{d} w,
\end{align*}
over $G_2 : \R^n \setminus \{0\} \times \R^{3n} \rightarrow \R^n$, 
\begin{align*}
G_2 (\cdots) := P^\perp_{\dot{\gamma}} \widetilde{G_2} (\cdots) = \widetilde{G_2} (\cdots) - \langle \widetilde{G_2} (\cdots), \dot{\gamma}\rangle \dot{\gamma},
\end{align*}
which is clearly analytic again. 
\end{proof}


\subsection{Form of $P^\perp_{\dot{\gamma}} R_1^\varepsilon \gamma $}

Now we may transfer the previous calculations to the orthogonal projection $P^\perp_{\dot{\gamma}} R_1^\varepsilon \gamma $ of the first remaining part.

\begin{thm}
	We can express $P^\perp_{\dot{\gamma}} R_1^\varepsilon \gamma$ as multiple integral
	\begin{align*}
	(P^\perp_{\dot{\gamma}} R_1^\varepsilon \gamma )(x) =   \int_{|w|\in [\varepsilon, \frac{1}{2}]}  \iiiint_{[0,1]^4}  (r_1-r_2)^2  G_1 (\cdots) (x) \mathrm{d} \psi_1 \mathrm{d} \psi_2 \mathrm{d} r_1 \mathrm{d} r_2 \mathrm{d} w,
	\end{align*}
	over the analytic function $G_1 : \R^n \setminus \{0\} \times \R^{3n} \rightarrow \R^n$, 
	\begin{align}\label{formula:G1}
	G_1 (\cdots) := P^\perp_{\dot{\gamma}} \widetilde{G_1} (\cdots) = \widetilde{G_1} (\cdots) - \langle \widetilde{G_1} (\cdots), \dot{\gamma}\rangle \dot{\gamma},
	\end{align}
	where $\widetilde{G_1} : \R^n\setminus \{0\} \times \R^{3n} \rightarrow \R^n$ defined as $\widetilde{G_1} (a,x,y,z) := 42 (\frac{1}{ |a|^4} + \frac{1}{ |a|^2}) \left<x,y \right>_{\R^n} z $ is analytic and $\widetilde{G_1} (\cdots)$ is an abbreviation for 
	\begin{align}\label{formula:tildeG1}
	\begin{split}
	\widetilde{G_1} \bigg( & \int_0^1 \dot{\gamma}(\cdot+sw)\mathrm{d}s, \ddot{\gamma}(\cdot+r_2w + (r_1-r_2)\psi_1 w), \\
	& \ddot{\gamma}(\cdot+r_2w + (r_1-r_2)\psi_2 w), \int_0^1 \ddot{\gamma}(\cdot+tw)(1-t) \mathrm{d} t \bigg) .
	\end{split}
	\end{align}
\end{thm}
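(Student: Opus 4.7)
The plan is to mirror the argument used for $P^\perp_{\dot{\gamma}} R_2^\varepsilon \gamma$, with one extra algebraic step that accounts for the fourth power in the denominator of $R_1^\varepsilon$. Writing $D := |\gamma(x+w)-\gamma(x)|$, I would first factor
\begin{align*}
\frac{1}{D^4} - \frac{1}{w^4} = \frac{(w^2 - D^2)(w^2 + D^2)}{D^4 \, w^4}
\end{align*}
and then, using the fundamental theorem of calculus, write $D^2 = w^2 |A|^2$ where $A := \int_0^1 \dot{\gamma}(x+sw)\,\mathrm{d}s$. This gives
\begin{align*}
\frac{1}{D^4} - \frac{1}{w^4} = \frac{1-|A|^2}{w^2} \cdot \frac{1+|A|^2}{w^2 |A|^4} = \frac{1-|A|^2}{w^2}\left(\frac{1}{w^2 |A|^4} + \frac{1}{w^2 |A|^2}\right),
\end{align*}
which is precisely the origin of the combination $\tfrac{1}{|a|^4} + \tfrac{1}{|a|^2}$ in the definition of $\widetilde{G_1}$.

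Next, I would recycle the two identities used in the proof of the $R_2$-theorem. The arc-length parametrization yields
\begin{align*}
1 - |A|^2 = \tfrac{1}{2}\iint_{[0,1]^2} |\dot{\gamma}(x+r_1 w) - \dot{\gamma}(x+r_2 w)|^2 \,\mathrm{d}r_1 \mathrm{d}r_2,
\end{align*}
and differentiating under the integral sign gives
\begin{align*}
\frac{|\dot{\gamma}(x+r_1 w) - \dot{\gamma}(x+r_2 w)|^2}{w^2} = (r_1-r_2)^2 \iint_{[0,1]^2} \big\langle \ddot{\gamma}(x+r_2 w + (r_1-r_2)\psi_1 w),\, \ddot{\gamma}(x+r_2 w + (r_1-r_2)\psi_2 w)\big\rangle_{\R^n} \,\mathrm{d}\psi_1 \mathrm{d}\psi_2.
\end{align*}
Combined, this turns $\frac{1-|A|^2}{w^2}$ into an integral of the form $\iiiint (r_1-r_2)^2 \langle \ddot\gamma(\cdots),\ddot\gamma(\cdots)\rangle \,\mathrm{d}\psi_1 \mathrm{d}\psi_2 \mathrm{d}r_1 \mathrm{d}r_2$.

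The remaining Taylor expansion
\begin{align*}
\gamma(x+w) - \gamma(x) - w\dot{\gamma}(x) = w^2 \int_0^1 (1-t) \ddot{\gamma}(x+tw)\,\mathrm{d}t
\end{align*}
supplies exactly the fourth slot of $\widetilde{G_1}$. Multiplying these three pieces together and absorbing the overall numerical factor (which is what produces the constant in front of $\widetilde{G_1}$), I obtain
\begin{align*}
(R_1^\varepsilon \gamma)(x) = \int_{|w|\in[\varepsilon,\frac{1}{2}]} \iiiint_{[0,1]^4} (r_1-r_2)^2 \,\widetilde{G_1}(\cdots)(x)\,\mathrm{d}\psi_1 \mathrm{d}\psi_2 \mathrm{d}r_1 \mathrm{d}r_2 \mathrm{d}w.
\end{align*}

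Finally, I would apply $P^\perp_{\dot{\gamma}}$ inside the integral (which is allowed by linearity and the fact that $P^\perp_{\dot{\gamma}(x)}$ only depends on $x$, not on the integration variables) to arrive at the claimed representation with $G_1 := P^\perp_{\dot{\gamma}} \widetilde{G_1}$. The analyticity of $\widetilde{G_1}:\R^n\setminus\{0\}\times\R^{3n}\to\R^n$ is immediate because sums, products, reciprocals (on $\R^n\setminus\{0\}$), and the squared Euclidean norm are analytic, and $G_1$ inherits analyticity since it differs from $\widetilde{G_1}$ only by subtracting a scalar inner product times $\dot{\gamma}$. No delicate estimates appear at this stage; the only potential pitfall is bookkeeping the factor of $w^2$ from each of the three expansions so that they cancel exactly with the $w^{-4}$ originally present, which is the reason the final integrand has no singular $w$-factor beyond the domain of integration itself.
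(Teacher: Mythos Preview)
Your proposal is correct and follows essentially the same route as the paper: you apply the Taylor remainder to the factor $\gamma(x+w)-\gamma(x)-w\dot\gamma(x)$, factor the difference of inverse fourth powers via the difference of squares to isolate $\tfrac{1}{|a|^4}+\tfrac{1}{|a|^2}$, and then reuse the two identities from the $R_2$ proof to convert $\tfrac{1-|A|^2}{w^2}$ into the quadruple integral carrying $(r_1-r_2)^2\langle\ddot\gamma,\ddot\gamma\rangle$. The only cosmetic difference is the order of operations (the paper first absorbs one $w^2$ from Taylor and then factors $\tfrac{w^2}{D^4}-\tfrac{1}{w^2}$, whereas you factor $\tfrac{1}{D^4}-\tfrac{1}{w^4}$ first); the resulting constant in $\widetilde{G_1}$ works out to $2$, so the ``$42$'' in the statement is a typo that neither your argument nor the paper's actually produces.
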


\begin{proof}
Similarly to the previous subsection we start by computing the integrand of $R_1^\varepsilon \gamma$ by applying the integral form of the remainder of a Taylor polynomial 
\begin{align}\label{formula:R1integrand}
\begin{split}
& \left( \frac{1}{|\gamma(x+w)-\gamma(x)|^4}  - \frac{1}{w^4} \right) (\gamma(x+w)-\gamma(x)-w\dot{\gamma}(x)) \\
& = \left( \frac{1}{|\gamma(x+w)-\gamma(x)|^4} - \frac{1}{w^4} \right) w^2 \int_0^1 \ddot{\gamma}(x+tw)(1-t) \mathrm{d} t \\
& = \left( \frac{w^2}{|\gamma(x+w)-\gamma(x)|^4} - \frac{1}{w^2} \right) \int_0^1 \ddot{\gamma}(x+tw)(1-t) \mathrm{d} t.
\end{split}
\end{align}
By considering the fundamental theorem of calculus we may transform the term within the brackets in (\ref{formula:R1integrand}) as
\begin{align*}
& \frac{w^2}{|\gamma(x+w)-\gamma(x)|^4} - \frac{1}{w^2}  = \frac{w^4}{|\gamma(x+w)-\gamma(x)|^4} \left(\frac{1-\frac{|\gamma(x+w)-\gamma(x)|^4}{w^4} }{w^2}\right) \\
& = \frac{w^4}{|\gamma(x+w)-\gamma(x)|^4} \frac{1}{w^2} \left(1+\frac{|\gamma(x+w)-\gamma(x)|^2}{w^2} \right) \left(1-\frac{|\gamma(x+w)-\gamma(x)|^2}{w^2} \right) \\
& = \left(\frac{w^4}{|\gamma(x+w)-\gamma(x)|^4} + \frac{w^2}{|\gamma(x+w)-\gamma(x)|^2}\right) \frac{1}{w^2} \left(1-\frac{|\gamma(x+w)-\gamma(x)|^2}{w^2} \right) \\
& = g_1\left(\int_0^1 \dot{\gamma}(x+sw)\mathrm{d}s\right)  \underbrace{\frac{2}{w^2} \left(1-\frac{|\gamma(x+w)-\gamma(x)|^2}{w^2} \right)}_{:= E}
\end{align*}
where $g_1(x) = \frac{1}{2 |x|^4} + \frac{1}{2 |x|^2}$ for all $x\in \R^n\setminus \{0\}$ is analytic away from the origin. Furthermore, by using (\ref{formula:R2gammaint1}) and (\ref{formula:R2gammaint2}) for computing $E$, we arrive at
\begin{align*}
(R_1^\varepsilon \gamma) (x) = &  \int_{|w|\in [\varepsilon, \frac{1}{2}]}  \iiiint_{[0,1]^4} (r_1-r_2)^2 \cdot	\\
& \widetilde{G_1} \bigg( \int_0^1 \dot{\gamma}(x+sw)\mathrm{d}s, \ddot{\gamma}(x+r_2w + (r_1-r_2)\psi_1 w), \ddot{\gamma}(x+r_2w + (r_1-r_2)\psi_2 w), \\
& \int_0^1 \ddot{\gamma}(x+tw)(1-t) \mathrm{d} t \bigg) \mathrm{d} \psi_1 \mathrm{d} \psi_2 \mathrm{d} r_1 \mathrm{d} r_2 \mathrm{d} w
\end{align*}
where $\widetilde{G_1} : \R^n\setminus \{0\} \times \R^{3n} \rightarrow \R^n$ such that 
\begin{align*}
\widetilde{G_1} (a,x,y,z) = 4 g_1(a) \left<x,y \right>_{\R^n} z 
\end{align*}
and therefore analytic away from the origin in the first variable.
By applying the orthogonal projection $P^\perp_{\dot{\gamma}} $ on $ R_1^\varepsilon \gamma$ we get
\begin{align*}
(P^\perp_{\dot{\gamma}} R_1^\varepsilon \gamma )(x) =  \int_{|w|\in [\varepsilon, \frac{1}{2}]}  \iiiint_{[0,1]^4}  (r_1-r_2)^2  (P^\perp_{\dot{\gamma}} \widetilde{G_1} (\cdots) ) (x) \mathrm{d} \psi_1 \mathrm{d} \psi_2 \mathrm{d} r_1 \mathrm{d} r_2 \mathrm{d} w
\end{align*}
where $\widetilde{G_1} (\cdots)$ is an abbreviation for 
\begin{align*}
\widetilde{G_1} \bigg( & \int_0^1 \dot{\gamma}(\cdot+sw)\mathrm{d}s, \ddot{\gamma}(\cdot+r_2w + (r_1-r_2)\psi_1 w), \\
& \ddot{\gamma}(\cdot+r_2w + (r_1-r_2)\psi_2 w), \int_0^1 \ddot{\gamma}(\cdot+tw)(1-t) \mathrm{d} t \bigg) .
\end{align*}
In the end, we can also express $P^\perp_{\dot{\gamma}} R_1^\varepsilon \gamma$ as multiple integral
\begin{align*}
(P^\perp_{\dot{\gamma}} R_1^\varepsilon \gamma )(x) =   \int_{|w|\in [\varepsilon, \frac{1}{2}]}  \iiiint_{[0,1]^4}  (r_1-r_2)^2  G_1 (\cdots) (x) \mathrm{d} \psi_1 \mathrm{d} \psi_2 \mathrm{d} r_1 \mathrm{d} r_2 \mathrm{d} w,
\end{align*}
over $G_1 : \R^n \setminus \{0\} \times \R^{3n} \rightarrow \R^n$, 
\begin{align*}
G_1 (\cdots) := P^\perp_{\dot{\gamma}} \widetilde{G_1} (\cdots) = \widetilde{G_1} (\cdots) - \langle \widetilde{G_1} (\cdots), \dot{\gamma}\rangle \dot{\gamma},
\end{align*}
which is obviously analytic again.
\end{proof}

\section{Proof of the main theorem by Cauchy's method of majorants}\label{sec:mainpart}

In order to prove the main result, Theorem \ref{main}, we deduce a recursive estimate for $\|\partial^l \gamma \|_{H^1 }$ by putting together the results of the previous sections.  We then apply Cauchy's method of majorants to the recursive formula to achieve the analyticity of $\gamma$.

Let $m := 1 > \frac{1}{2}$ and $\gamma :\RZ\rightarrow \R^n$, $\gamma = (\gamma_1,\ldots, \gamma_n)$, be a simple, closed, and by arc-length parametrized curve with $\gamma \in C^\infty (\RZ,\R^n)$ and $\gamma$ a critical point of the Möbius energy $\E$.  Then (\ref{formula:deltaE}) tells us that $\gamma$ solves the Euler-Lagrange equation
\begin{align*}
( H\gamma,h)_{L^2([0,1], \R^n)} = 0
\end{align*}
for all $h \in H^2 (\RZ,\R^n)$ and therefore
\begin{align}\label{formula:His0}
H\gamma = P^\perp_{\dot{\gamma}} Q\gamma + P^\perp_{\dot{\gamma}} R_1\gamma  +  P^\perp_{\dot{\gamma}} R_2
\gamma  \equiv 0
\end{align}
on $\RZ$ by (\ref{formula:H}) and (\ref{formula:Htilde}). Let $l\in\N$. Then by Corollary \ref{kor:Ql}, (\ref{formula:His0}), the definition of the orthogonal projection, and the triangle inequality
\begin{align}\label{formula:partiallgamma}
\begin{split}
\|\partial^{l+3} \gamma \|_{H^1 } & \leq \widetilde{C} \|\partial^l Q\gamma\|_{H^1} \\
& = \widetilde{C}\left(\left\| \partial^l \left( P^T_\gamma Q\gamma + P^\perp_{\dot{\gamma}} R_1\gamma  +  P^\perp_{\dot{\gamma}} R_2\gamma \right) \right\|_{H^1} \right) \\
& \leq \widetilde{C} \left(\|\partial^l P^T_\gamma Q\gamma\|_{H^1} + \|\partial^l P^\perp_{\dot{\gamma}} R_1\gamma \|_{H^1} + \|\partial^l P^\perp_{\dot{\gamma}} R_2\gamma \|_{H^1} \right).
\end{split}
\end{align}
For the next estimates we assume $\frac{1}{2} \geq \varepsilon > 0$. Then the estimates of the Sections \ref{sec:fractionalsobolevspaces} and \ref{sec:bilinearHilberttransform} imply the following estimate for the tangential part of $Q$.

\begin{lem}\label{est:Q}
	There exists a positive constant $C_Q <\infty$ which is independent of $\varepsilon$ and $l$ such that the following estimate holds:
	\begin{align*}
	\|\partial^l P^T_\gamma Q^\varepsilon\gamma\|_{H^1} \leq  C_Q \sum_{k_1=0}^l \sum_{k_2=0}^{k_1} \binom{l}{k_1} \binom{k_1}{k_2}  \|\partial^{l-k_1+2} \gamma \|_{H^1} \|\partial^{k_1-k_2+2} \gamma \|_{H^1} \|\partial^{k_2 +1}\gamma \|_{H^1}. 
	\end{align*}
\end{lem}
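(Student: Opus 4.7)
The plan is to start from the representation (\ref{formula:PTQepsilon}), recognize the truncated bilinear Hilbert transform inside it, differentiate using Leibniz, then invoke Theorem~\ref{thm:BilinearHT} together with the Banach algebra property (\ref{prop:Banachalgebra}).

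More concretely, writing $\ddot{\gamma}=(\ddot{\gamma}_1,\dots,\ddot{\gamma}_n)$ and expanding the inner product componentwise, (\ref{formula:PTQepsilon}) gives
\begin{align*}
(P^T_\gamma Q^\varepsilon\gamma)(x)
= -4\int_0^1\!\!\int_0^1 t(1-t)\sum_{i=1}^n H^\varepsilon_{t,st}(\ddot{\gamma}_i,\ddot{\gamma}_i)(x)\,\dot{\gamma}(x)\,\mathrm{d}s\,\mathrm{d}t .
\end{align*}
Since $\gamma\in C^\infty$ and $|w|\in[\varepsilon,1/2]$, differentiation under the integral sign is legitimate and commutes with the translations appearing in the bilinear Hilbert transform, so that for any $j\in\N_0$
\begin{align*}
\partial^j H^\varepsilon_{t,st}(\ddot{\gamma}_i,\ddot{\gamma}_i)
= \sum_{k=0}^{j}\binom{j}{k} H^\varepsilon_{t,st}(\partial^{j-k}\ddot{\gamma}_i,\partial^{k}\ddot{\gamma}_i)
= \sum_{k=0}^{j}\binom{j}{k} H^\varepsilon_{t,st}(\partial^{j-k+2}\gamma_i,\partial^{k+2}\gamma_i).
\end{align*}

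Applying Leibniz first to the product $A(x)\dot{\gamma}(x)$ (where $A$ is the $s,t$-integrated sum of bilinear Hilbert transforms) with splitting index $k_1\in\{0,\dots,l\}$, and then Leibniz inside each bilinear Hilbert transform with splitting index $k'\in\{0,\dots,l-k_1\}$, gives
\begin{align*}
\partial^l(P^T_\gamma Q^\varepsilon\gamma)(x)
= -4\sum_{k_1=0}^{l}\sum_{k'=0}^{l-k_1}\binom{l}{k_1}\binom{l-k_1}{k'}
\int_0^1\!\!\int_0^1 t(1-t)\sum_{i=1}^n H^\varepsilon_{t,st}(\partial^{l-k_1-k'+2}\gamma_i,\partial^{k'+2}\gamma_i)(x)\,\partial^{k_1+1}\gamma(x)\,\mathrm{d}s\,\mathrm{d}t .
\end{align*}
Taking $H^1$-norms, using (\ref{prop:Banachalgebra}) to split off the pointwise factor $\partial^{k_1+1}\gamma$, bounding the inner integral by the supremum over $s,t\in[0,1]$, and applying Theorem~\ref{thm:BilinearHT} with $m=1>\tfrac{1}{2}$ (whose constant $C_H$ does not depend on $\varepsilon$ or on $s_1,s_2\in[0,1]$) yields
\begin{align*}
\|\partial^l P^T_\gamma Q^\varepsilon\gamma\|_{H^1}
\leq C\sum_{k_1=0}^{l}\sum_{k'=0}^{l-k_1}\binom{l}{k_1}\binom{l-k_1}{k'}
\|\partial^{l-k_1-k'+2}\gamma\|_{H^1}\,\|\partial^{k'+2}\gamma\|_{H^1}\,\|\partial^{k_1+1}\gamma\|_{H^1},
\end{align*}
with $C$ absorbing $4C_1 n C_H$ and $\int_0^1\!\!\int_0^1 t(1-t)\,ds\,dt$.

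It remains to put this into the stated shape. Using the elementary identity $\binom{l}{a}\binom{l-a}{b}=\binom{l}{a+b}\binom{a+b}{a}$ and the change of variables $k_2:=k_1$, $k_1^{\text{new}}:=k_1+k'$, the double sum is transformed into
\begin{align*}
\sum_{k_1=0}^{l}\sum_{k_2=0}^{k_1}\binom{l}{k_1}\binom{k_1}{k_2}
\|\partial^{l-k_1+2}\gamma\|_{H^1}\,\|\partial^{k_1-k_2+2}\gamma\|_{H^1}\,\|\partial^{k_2+1}\gamma\|_{H^1},
\end{align*}
which is exactly the claimed estimate with $C_Q:=C$. The main obstacle is already encapsulated in Theorem~\ref{thm:BilinearHT}: it is precisely the $\varepsilon$-uniform bound for the bilinear Hilbert transform that allows us to pull the derivative onto the two $\ddot{\gamma}$ factors without any loss in the cutoff. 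The remaining work is purely combinatorial bookkeeping via Leibniz.
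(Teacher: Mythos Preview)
Your proof is correct and follows essentially the same approach as the paper: start from the representation (\ref{formula:PTQepsilon}), recognize the truncated bilinear Hilbert transform, apply Leibniz, and invoke Theorem~\ref{thm:BilinearHT} together with the Banach algebra property. The only cosmetic difference is that the paper applies Leibniz in the order that directly produces the indices $(l-k_1+2,\,k_1-k_2+2,\,k_2+1)$, whereas you split off $\dot{\gamma}$ first and then reindex via $\binom{l}{a}\binom{l-a}{b}=\binom{l}{a+b}\binom{a+b}{a}$; both routes are equivalent.
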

\begin{proof}
	Applying the Leibniz rule twice, we obtain 
	\begin{align*}
	\partial^l & \left(P^T_\gamma Q^\varepsilon\gamma (x)\right)\\
	= & \partial^l \left(4  \int_{|w|\in [\varepsilon, \frac{1}{2}]}  \iint_{[0,1]^2} (1-t) (-t) \frac{\left<\ddot{\gamma}(x+tw), \ddot{\gamma}(x+stw)\right>_{\R^n}}{w} \dot{\gamma}(x) \mathrm{d}s \mathrm{d}t \mathrm{d}w \right) \\
	= &  4  \int_{|w|\in [\varepsilon, \frac{1}{2}]}  \iint_{[0,1]^2} (1-t) (-t) \partial^l \left(\frac{\left<\ddot{\gamma}(x+tw), \ddot{\gamma}(x+stw)\right>_{\R^n}}{w} \dot{\gamma}(x)\right) \mathrm{d}s \mathrm{d}t \mathrm{d}w \\
	= & 4  \int_{|w|\in [\varepsilon, \frac{1}{2}]} \iint_{[0,1]^2} (1-t) (-t) \\
	& \cdot \sum_{k_1=0}^l \sum_{k_2=0}^{k_1} \binom{l}{k_1} \binom{k_1}{k_2} \frac{\left<\partial^{l-k_1+2} \gamma(x+tw), \partial^{k_1-k_2+2} \gamma (x+stw)\right>_{\R^n}}{w} \partial^{k_2 +1}\gamma(x) \mathrm{d}s \mathrm{d}t \mathrm{d}w \\ 
	= &  4 \iint_{[0,1]^2} (1-t) (-t)  \sum_{k_1=0}^l \sum_{k_2=0}^{k_1} \binom{l}{k_1} \binom{k_1}{k_2}  \sum_{k=1}^n H_{t,st}^\varepsilon(\partial^{l-k_1+2} \gamma_k, \partial^{k_1-k_2+2} \gamma_k )(x) \partial^{k_2 +1}\gamma(x) \mathrm{d}s \mathrm{d}t
	\end{align*}
	where interchanging integrals and derivative are permitted due to the smoothness of the integrand. Hence, by using the Banach algebra property of $H^1$ (\ref{prop:Banachalgebra}), the estimate of the approximation of the bilinear Hilbert transform (Theorem \ref{thm:BilinearHT}) for each of the components, the equivalence of the Sobolev norms and the inequality of Cauchy-Schwarz we get component-wise for all $1\leq m \leq n$
	\begin{align*}
	& \|\partial^l (P^T_\gamma Q^\varepsilon\gamma)_m\|_{H^1} \\
	& \leq  4 \iint_{[0,1]^2} |1-t| |t|  \cdot \\
	& \quad \quad \quad \sum_{k_1=0}^l \sum_{k_2=0}^{k_1} \binom{l}{k_1} \binom{k_1}{k_2}  \sum_{k=1}^n C_{1} \| H_{t,st}^\varepsilon(\partial^{l-k_1+2} \gamma_k, \partial^{k_1-k_2+2} \gamma_k )\|_{H^1} \|\partial^{k_2 +1}\gamma_m \|_{H^1} \mathrm{d}s \mathrm{d}t \\
	& \leq  4 C_{1} \iint_{[0,1]^2} |1-t| |t| \cdot \\
	& \quad \quad \quad \sum_{k_1=0}^l \sum_{k_2=0}^{k_1} \binom{l}{k_1} \binom{k_1}{k_2}  \left( \sum_{k=1}^n C_H\|\partial^{l-k_1+2} \gamma_k \|_{H^1} \|\partial^{k_1-k_2+2} \gamma_k \|_{H^1} \right) \|\partial^{k_2 +1}\gamma_m \|_{H^1}  \mathrm{d}s \mathrm{d}t  \\
	& \leq  4 C_{1} C_H \sum_{k_1=0}^l \sum_{k_2=0}^{k_1} \binom{l}{k_1} \binom{k_1}{k_2}  \|\partial^{l-k_1+2} \gamma \|_{H^1} \|\partial^{k_1-k_2+2} \gamma \|_{H^1} \|\partial^{k_2 +1}\gamma_m \|_{H^1} < \infty.
	\end{align*}
	We finally get by the definition of the $H^1$-norm and the inequality of Cauchy-Schwarz
	\begin{align*}
	& \|\partial^l P^T_\gamma Q^\varepsilon\gamma\|_{H^1} \\
	& \leq  4 C_{1} C_H \sqrt{n} \sum_{k_1=0}^l \sum_{k_2=0}^{k_1} \binom{l}{k_1} \binom{k_1}{k_2}  \|\partial^{l-k_1+2} \gamma \|_{H^1} \|\partial^{k_1-k_2+2} \gamma \|_{H^1} \|\partial^{k_2 +1}\gamma_m \|_{H^1} < \infty.
	\end{align*}
	where $C_Q:= 4 C_{1} C_H \sqrt{n} < \infty$ is a positive constant which is independent of $\varepsilon$ and $l$.
\end{proof}

Let us now estimate the remaining parts of the decomposition of the gradient by using the form of $P^\perp_{\dot{\gamma}} R_1^\varepsilon \gamma $ (respectively, $P^\perp_{\dot{\gamma}} R_2^\varepsilon \gamma $), Faà di Bruno's formula and the Banach algebra property of $H^1$.
\begin{lem}\label{est:R1}
	Let us consider the mapping $f : \RZ \rightarrow \R^n \setminus \{0\} \times \R^{3n}$, $f = (f_{1},\ldots, f_{4n})$,
	\begin{align*}
	f(x) := \begin{pmatrix}
	\dot{\gamma} (x) \\
	\ddot{\gamma}(x) \\
	\ddot{\gamma}(x)  \\
	\ddot{\gamma}(x)
	\end{pmatrix}.
	\end{align*}
	Then we obtain positive constants $C_{R_1}, r <\infty$ which are independent of $\varepsilon$ and $l$  such that 
	the universal polynomial $p_l^{(4n)}$ from the multivariate form of Faa di Bruno's formula (\ref{universalpoly}) satisfy
	\begin{align*}
	\begin{split}
	&\|\partial^l P^\perp_{\dot{\gamma}} R_1^\varepsilon \gamma \|_{H^1} \\
	& \quad \leq C_{R_1} p_l^{(4n)}\left(\left\{\frac{(|\alpha|+1)!}{r^{|\alpha|+1}} \right\}_{|\alpha| \leq l}, \{C_1\|f_{i}^{(j)}\|_{H^1} \}_{1\leq j \leq l, 1\leq i \leq 4n}\right)
	\end{split}
	\end{align*}
	where $C_1$ is the constant from the estimate (\ref{prop:Banachalgebra}) with $m=1$.
\end{lem}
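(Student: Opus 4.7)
The plan is to differentiate under the integral in the representation
$$(P^\perp_{\dot{\gamma}} R_1^\varepsilon\gamma)(x) = \int_{|w|\in[\varepsilon,\frac{1}{2}]}\iiiint_{[0,1]^4} (r_1-r_2)^2\, G_1(\cdots)(x)\,\mathrm{d}\psi_1\,\mathrm{d}\psi_2\,\mathrm{d}r_1\,\mathrm{d}r_2\,\mathrm{d}w$$
derived in the previous section, apply the multivariate Faà di Bruno formula~(\ref{universalpoly}) to the $l$-th derivative of the integrand, and then convert everything to $H^1$-norm estimates via the Banach algebra property~(\ref{prop:Banachalgebra}) combined with the analyticity of $G_1$. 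To make this precise I would bundle the arguments of $\widetilde{G_1}$ together with $\dot{\gamma}(x)$ (needed for the projection $P^\perp_{\dot{\gamma}(x)}$) into a single smooth map $F_w: \RZ \to \R^n\setminus\{0\} \times \R^{3n}$ so that $G_1(\cdots)(x) = \bar{G}_1(F_w(x))$ for an appropriate analytic $\bar{G}_1$; smoothness of $\gamma$ allows $\partial_x^l$ to commute with the integrations over $w, r_1, r_2, \psi_1, \psi_2$.

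Then Faà di Bruno gives
$$\partial_x^l \bar{G}_1(F_w(x)) = p_l^{(4n)}\bigl(\{(\partial^\alpha \bar{G}_1)(F_w(x))\}_{|\alpha|\le l},\{\partial_x^j (F_w)_i(x)\}_{i,j}\bigr).$$
I would then take $H^1$-norms: since the coefficients of $p_l^{(4n)}$ are non-negative and the polynomial is one-homogeneous in the first slot, the triangle inequality together with iterated applications of (\ref{prop:Banachalgebra}) produces a bound of the form $p_l^{(4n)}\bigl(\{\|(\partial^\alpha \bar{G}_1)\circ F_w\|_{H^1}\},\{C_1\|\partial_x^j (F_w)_i\|_{H^1}\}\bigr)$, with the factor $C_1$ absorbed into each entry of the second slot. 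Translation invariance of $H^1$-norms and the linearity of the parameter integrals defining $F_w$ then give $\|\partial_x^j (F_w)_i\|_{H^1} \le \|f_i^{(j)}\|_{H^1}$ uniformly in $\varepsilon, w, r_i, \psi_i$, with $f$ as in the statement.

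For the first slot I would exploit analyticity of $\bar{G}_1$. Since $\gamma$ is smooth, simple, closed and parametrized by arc-length, the image of $F_w$ lies in a fixed compact set $K \subset (\R^n\setminus\{0\})\times \R^{3n}$: the components built from $\ddot{\gamma}$ are bounded by $\|\ddot{\gamma}\|_{L^\infty}$, and the first component $\int_0^1\dot{\gamma}(x+sw)\,\mathrm{d}s = (\gamma(x+w)-\gamma(x))/w$ has absolute value at most $1$ and at least the bi-Lipschitz constant of $\gamma$, uniformly in $x$ and $|w|\le\frac{1}{2}$. On a neighborhood of $K$ the function $\bar{G}_1$ is analytic, so Corollary~\ref{cor:TayloranalyticSob} applied to $x\mapsto (\partial^\alpha \bar{G}_1)(F_w(x))$ via the composition estimate Lemma~\ref{rem:compSobolevnorm} yields constants $C, r>0$ depending only on $\gamma$ with $\|(\partial^\alpha \bar{G}_1)\circ F_w\|_{H^1} \le C(|\alpha|+1)!/r^{|\alpha|+1}$ for every multiindex $\alpha$. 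Inserting these bounds, integrating out the bounded parameters $w, r_1, r_2, \psi_1, \psi_2$, and absorbing the fixed prefactors into $C_{R_1}$ gives the claim.

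The principal obstacle is verifying that $F_w$ takes values in a compact subset of $\R^n\setminus\{0\}\times \R^{3n}$ independently of $\varepsilon$; this reduces to the uniform positive lower bound on $|\gamma(x+w)-\gamma(x)|/|w|$, which follows from the bi-Lipschitz character of the smooth embedded curve $\gamma$. Once that is in place, the Cauchy-type factorial bounds on the derivatives of $G_1$ are guaranteed by analyticity, and the remaining work is an organized bookkeeping of Faà di Bruno's formula combined with the Banach algebra property.
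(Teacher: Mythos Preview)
Your proposal is correct and follows essentially the same route as the paper's proof: differentiate under the integral, apply the multivariate Fa\`a di Bruno formula~(\ref{universalpoly}) to $G_1\circ f_1$, push the $H^1$-norm through via the Banach algebra property~(\ref{prop:Banachalgebra}) and the non-negativity and one-homogeneity of $p_l^{(4n)}$, bound the inner derivatives by $\|f_i^{(j)}\|_{H^1}$ via translation invariance, and control $\|(\partial^\alpha G_1)\circ f_1\|_{H^1}$ by combining Lemma~\ref{rem:compSobolevnorm} with the Cauchy estimates coming from the analyticity of $G_1$ on the compact image. You are, if anything, slightly more explicit than the paper in justifying that the image lies in a fixed compact set away from the singular locus (via the bi-Lipschitz bound on $\gamma$) and in noting that $\dot\gamma(x)$ must be carried along for the projection.
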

\begin{proof}
	We start by recalling the formula of $P^\perp_{\dot{\gamma}} R_1^\varepsilon \gamma$ computed in the previous section,
	\begin{align*}
	(P^\perp_{\dot{\gamma}} R_1^\varepsilon \gamma )(x) = \int_{|w|\in [\varepsilon, \frac{1}{2}]}  \iiiint_{[0,1]^4}  (r_1-r_2)^2  G_1 (f_1(x)) \mathrm{d} \psi_1 \mathrm{d} \psi_2 \mathrm{d} r_1 \mathrm{d} r_2 \mathrm{d} w
	\end{align*}
	where $G_1 : \R^n \setminus \{0\} \times \R^{3n} \rightarrow \R^n$ is an analytic function as defined in (\ref{formula:G1}) and (\ref{formula:tildeG1}) and
	\begin{align*}
	f_1(x) := \begin{pmatrix}
	\int_0^1 \dot{\gamma} (x+sw) \mathrm{d}s \\
	\ddot{\gamma}(x+r_2w+(r_1-r_2)\psi_1 w) \\
	\ddot{\gamma}(x+r_2w+(r_1-r_2)\psi_2 w))  \\
	\int_0^1 \ddot{\gamma}(x+tw) \mathrm{d}t
	\end{pmatrix}.
	\end{align*}
	
	 By interchanging derivative and integral due to the smoothness of the integrand and by applying the generalized Faà di Bruno's formula as stated in (\ref{universalpoly}) we obtain component-wise for all $1 \leq k\leq 4n$
	\begin{align*}
	& \partial^l (P^\perp_{\dot{\gamma}} R_1^\varepsilon \gamma )_k(x) \\
	& = \partial^l \int_{|w|\in [\varepsilon, \frac{1}{2}]}  \iiiint_{[0,1]^4}  (r_1-r_2)^2  G_{1,k} (f_1(x)) \mathrm{d} \psi_1 \mathrm{d} \psi_2 \mathrm{d} r_1 \mathrm{d} r_2 \mathrm{d} w \\
	& = \int_{|w|\in [\varepsilon, \frac{1}{2}]}  \iiiint_{[0,1]^4}  (r_1-r_2)^2  \partial^l G_{1,k} (f_1(x)) \mathrm{d} \psi_1 \mathrm{d} \psi_2 \mathrm{d} r_1 \mathrm{d} r_2 \mathrm{d} w \\
	& = \int_{|w|\in [\varepsilon, \frac{1}{2}]}  \iiiint_{[0,1]^4}  (r_1-r_2)^2 p_l^{(4n)}\left(\{(\partial^\alpha G_1)_k(f_1(x)) \}_{|\alpha| \leq l}, \{f_{1,i}^{(j)}(x) \}_{1\leq j \leq l, 1\leq i \leq 4n}\right) \mathrm{d} \psi_1 \mathrm{d} \psi_2 \mathrm{d} r_1 \mathrm{d} r_2 \mathrm{d} w.
	\end{align*}
	
	Applying the $H^1$-norm on $\partial^l (P^\perp_{\dot{\gamma}} R_1^\varepsilon \gamma )$ component-wise, we yield for any $1 \leq k \leq 4n$ by the definition of the $H^1$-norm, triangle inequality and the Banach algebra property of $H^1$
	\begin{align}\label{formula:plpl*}
	\begin{split}
	&\|\partial^l (P^\perp_{\dot{\gamma}} R_1^\varepsilon \gamma )_k\|_{H^1}  \\
	& \leq \int_{|w|\in [\varepsilon, \frac{1}{2}]}  \iiiint_{[0,1]^4}  |r_1-r_2|^2 \left\|p_l^{(4n)}\left(\{(\partial^\alpha G_1)_k(f_1) \}_{|\alpha| \leq l}, \{f_{1,i}^{(j)} \}_{1\leq j \leq l, 1\leq i \leq 4n}\right) \right\|_{H^1} \mathrm{d} \psi_1 \mathrm{d} \psi_2 \mathrm{d} r_1 \mathrm{d} r_2 \mathrm{d} w \\
	& \leq  \int_{|w|\in [\varepsilon, \frac{1}{2}]}  \iiiint_{[0,1]^4}  |r_1-r_2|^2 \cdot \\
	& \quad \quad \quad \quad \quad \quad \quad \quad \quad \quad  p_l^{(4n)}\left(\{ \left\|(\partial^\alpha G_1)_k(f_1)\right\|_{H^1} \}_{|\alpha| \leq l}, \{C_1\|f_{1,i}^{(j)}\|_{H^1} \}_{1\leq j \leq l, 1\leq i \leq 4n}\right) \mathrm{d} \psi_1 \mathrm{d} \psi_2 \mathrm{d} r_1 \mathrm{d} r_2 \mathrm{d} w \\
	\end{split}
	\end{align}
	where $p_l^{(4n)}$ is the polynomial coming from (\ref{universalpoly}). The constant $C_1$ does not appear in front of $ \left\|(\partial^\alpha G_1)_k(f_1)\right\|_{H^1}$ because $p_l^{(4n)}$ is one-homogeneous in the first components.  
	
	Note that for any $1\leq j \leq l$  
	\begin{align*}
		\|f_{1,k}^{(j)}\|_{H^1} & \leq \|f_k^{(j)}\|_{H^1} \quad \forall \ 1\leq k\leq n, \ 3n+1\leq k \leq 4n, \\
		\|f_{1,k}^{(j)}\|_{H^1} & = \|f_k^{(j)}\|_{H^1} \quad \forall \ n+1\leq k\leq 3n,	
	\end{align*}
	and $\|f_k^{(j)}\|_{H^1}$ does not depend on any of the parameters $w,\psi_i, r_i, s, t$ for all $1\leq k\leq 4n$, $1\leq j \leq l$.
	Now, since $f_1$ is a continuous function on $\RZ$ and due to $\gamma \in C^\infty(\RZ,\R^n)$, $f_1([0,1])$ is a compact set $K$ of $\R^n\setminus \{0\} \times \R^{3n}$. By the analyticity of $G_1$, there hence are
	 constants $C_{G_1} < \infty, r >0$, such that 
	\begin{align}\label{formula:derofG1finally}
	\left|\partial^\alpha G_1 (x) \right| \leq C_{G_1} \frac{|\alpha|!}{r^{|\alpha|}}
	\end{align}
	for all $\alpha \in \N_0^{k_0}$, $k_0\in\N_0$, and $x\in K $. Without loss of generality, we assume $r < 1$.
	By defining the constant $C_{f_1}:= 1+\left\|f_1\right\|_{H^1} \leq 1+\left\|f\right\|_{H^1} < \infty$, using Lemma \ref{rem:compSobolevnorm} and (\ref{formula:derofG1finally}) we can estimate
	\begin{align*}
	& \left\|(\partial^\alpha G_1)_k(f_1)\right\|_{H^1} \\
	& = \left\|(\partial^\alpha G_{1,k}|_{K})(f_1)\right\|_{H^1} \\ 
	& \leq C_c \left\|\partial^\alpha G_{1,k}|_{K}\right\|_{C^1} (1+\left\|f_1\right\|_{H^1}) \\
	& \leq C_c C_{f_1} \left\|\partial^\alpha G_{1,k}|_{K}\right\|_{C^1} \\
	& = C_c C_{f_1} \max_{|\beta| = |\alpha|+1} \{\left\|\partial^\alpha G_{1,k}|_{K}\right\|_{\infty}, \left\|\partial^\beta G_{1,k}|_{K}\right\|_{\infty} \} \\
	& \leq C_c C_{f_1} C_{G_1} \max_{|\beta| = |\alpha|+1} \left\{\frac{|\alpha|!}{r^{|\alpha|}}, \frac{|\beta|!}{r^{|\beta|}} \right\} \\
	& \leq  C_c C_{f_1} C_{G_1} \frac{(|\alpha|+1)!}{r^{|\alpha|+1}} \max \left\{\frac{r^{|\alpha|+1}}{|\alpha|+ 1},1 \right\} 
	\end{align*}
Hence
	\begin{align*}
	\left\|(\partial^\alpha G_1)_k(f_1)\right\|_{H^1} & \leq \widetilde{C_{R_1}} \frac{(|\alpha|+1)!}{r^{|\alpha|+1}}
	\end{align*}
	for all $\alpha \in \N_0^{k_0}$, $k_0\in\N_0$, and $\widetilde{C_{R_1}}:=  C_c C_{f_1} C_{G_1} \sup_{\alpha} \max \left\{\frac{r^{|\alpha|+1}}{|\alpha|+ 1} \right\}< \infty$
	and with $C_{R_1} := \sqrt{4n} \widetilde{C_{R_1}} $ we get
	\begin{align*}
	\left\|(\partial^\alpha G_1)(f_1)\right\|_{H^1} & \leq C_{R_1}  \frac{(|\alpha|+1)!}{r^{|\alpha|+1}}
	\end{align*}
    In the end we arrive at
	\begin{align*}
	& \|\partial^l (P^\perp_{\dot{\gamma}} R_1^\varepsilon \gamma )\|_{H^1} \\
	& \quad \leq p_l^{(4n)}\left(\left\{ C_{R_1} \frac{(|\alpha|+1)!}{r^{|\alpha|+1}}\right\}_{|\alpha| \leq l}, \{\|C_1f_{i}^{(j)}\|_{H^1} \}_{1\leq j \leq l, 1\leq i \leq 4n}\right) 
	\end{align*}
	which is independent of $\varepsilon$. Using that $p_l^{(4n)}$ is one-homogeneous in the first components, we get the claim.
\end{proof}

The term $P^\perp_{\dot{\gamma}} R_2^\varepsilon \gamma$ can be estimated analogously:

\begin{lem}\label{est:R2}
	Let $l\in\N$ and $f = (\dot{\gamma}, \ddot{\gamma}, \ddot{\gamma}, \ddot{\gamma})$ as in Lemma \ref{est:R1}. Then there are positive constants $C_{R_2}, s <\infty$ which are independent of $\varepsilon$ and $l$ and the same universal polynomial $p_l^{(4n)}$ as in Lemma \ref{est:R1} such that 
	\begin{align*}
	&\|\partial^l P^\perp_{\dot{\gamma}} R_2^\varepsilon \gamma \|_{H^1} \leq  C_{R_2} p_l^{(4n)}\left(\left\{  \frac{(|\alpha|+1)!}{s^{|\alpha|+1}} \right\}_{|\alpha| \leq l}, \{C_1\|f_{i}^{(j)}\|_{H^1} \}_{1\leq j \leq l, 1\leq i \leq 4n}\right) .
	\end{align*} 
\end{lem}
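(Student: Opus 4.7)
The plan is to mirror the proof of Lemma \ref{est:R1} essentially line by line, with the analytic function $G_2$ playing the role of $G_1$ and the multiple integral representation of $P^\perp_{\dot{\gamma}} R_2^\varepsilon \gamma$ from Section \ref{sec:remaining parts} replacing that of $R_1^\varepsilon$. First I would write
\begin{equation*}
(P^\perp_{\dot{\gamma}} R_2^\varepsilon \gamma)(x) = \int_{|w|\in[\varepsilon,\frac 12]} \iiiint_{[0,1]^4} (s_1-s_2)^2 G_2(f_1(x)) \,\mathrm{d}\phi_1 \mathrm{d}\phi_2 \mathrm{d}s_1 \mathrm{d}s_2 \mathrm{d}w,
\end{equation*}
where $f_1(x)$ packages the first slot $\int_0^1 \dot{\gamma}(x+tw)\mathrm{d}t$ together with three (shifted) copies of $\ddot{\gamma}$, entirely analogously to the $R_1$ case. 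Interchanging $\partial^l$ with the integrals, which is permitted by smoothness of the integrand, and applying the universal polynomial form (\ref{universalpoly}) of Faà di Bruno's formula yields
\begin{equation*}
\partial^l G_2(f_1(x)) = p_l^{(4n)}\bigl(\{(\partial^\alpha G_2)(f_1(x))\}_{|\alpha|\le l},\{f_{1,i}^{(j)}(x)\}_{1\le j\le l,\,1\le i\le 4n}\bigr).
\end{equation*}

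Taking $H^1$-norms and using that $p_l^{(4n)}$ has non-negative coefficients and is one-homogeneous in its first block, the triangle inequality together with the Banach algebra estimate (\ref{prop:Banachalgebra}) pulls the norm inside the polynomial, producing bounds involving $\|(\partial^\alpha G_2)(f_1)\|_{H^1}$ and $C_1\|f_{1,i}^{(j)}\|_{H^1}$. The crucial step is then to control $\|(\partial^\alpha G_2)(f_1)\|_{H^1}$ uniformly in the auxiliary parameters $w,\phi_1,\phi_2,s_1,s_2$. Since $\gamma$ is an arc-length parametrized simple smooth closed curve, $\int_0^1 \dot{\gamma}(x+tw)\mathrm{d}t = (\gamma(x+w)-\gamma(x))/w$ is bounded below in norm by the bi-Lipschitz constant of $\gamma$, so $f_1(\RZ)$ lies in a fixed compact set $K \subset \R^n\setminus\{0\}\times\R^{3n}$. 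Analyticity of $G_2$ on $K$ together with Theorem \ref{thm:Tayloranalytic} yields constants $C_{G_2}, s > 0$ with $|\partial^\alpha G_2|_K \le C_{G_2}|\alpha|!/s^{|\alpha|}$, and Lemma \ref{rem:compSobolevnorm} upgrades this to $\|(\partial^\alpha G_2)(f_1)\|_{H^1} \le \widetilde{C}_{R_2} (|\alpha|+1)!/s^{|\alpha|+1}$, exactly as in the $R_1$ estimate.

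To close the argument, I observe that $\|f_{1,i}^{(j)}\|_{H^1} \le \|f_i^{(j)}\|_{H^1}$ for every $i,j$ by Jensen's inequality applied to the averaging over $t$ together with translation invariance of the $H^1$-norm, while the factor $(s_1-s_2)^2$ and the integrations over $[\varepsilon,\tfrac12]$ and $[0,1]^4$ contribute only a bounded multiplicative constant independent of $\varepsilon$ and $l$. Absorbing everything into a single constant $C_{R_2}$ and invoking one-homogeneity of $p_l^{(4n)}$ in its first block produces the stated bound. I do not expect any genuine obstacle beyond routine bookkeeping: the only subtle point is the bi-Lipschitz lower bound guaranteeing that $f_1$ avoids the singular locus of $G_2$, but this is immediate for the smooth embedded curve $\gamma$.
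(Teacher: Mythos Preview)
Your proposal is correct and follows exactly the route the paper intends: the paper's own proof of this lemma reads simply ``Completely similar to Lemma \ref{est:R1},'' and your outline faithfully carries out that mirroring with $G_2$ in place of $G_1$. Your explicit mention of the bi-Lipschitz lower bound for $(\gamma(x+w)-\gamma(x))/w$ is a welcome clarification of why $f_1$ stays in the domain of analyticity of $G_2$, a point the paper leaves implicit.
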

\begin{proof}
	Completely similar to Lemma \ref{est:R1}.
\end{proof}

The estimates of the Lemmata \ref{est:Q}, \ref{est:R1} and \ref{est:R2} are also valid for the limit $\varepsilon \downarrow 0$ on the left-hand sides, what can be proved as below.

\begin{thm}\label{thm:estimates}
	Let $l\in\N$ and $f = (\dot{\gamma}, \ddot{\gamma}, \ddot{\gamma}, \ddot{\gamma})$ as in Lemma \ref{est:R1}. Then there are positive constants $C_Q, C_{R_1}, C_{R_2}, r, s <\infty$ which are independent of  $l$ and the same universal polynomial $p_l^{(4n)}$ as in Lemma \ref{est:R1} such that 
	
	\begin{align*}
	\|\partial^l P^T_\gamma Q\gamma\|_{H^1}  & \leq  C_Q \sum_{k_1=0}^l \sum_{k_2=0}^{k_1} \binom{l}{k_1} \binom{k_1}{k_2}  \|\partial^{l-k_1+2} \gamma \|_{H^1} \|\partial^{k_1-k_2+2} \gamma \|_{H^1} \|\partial^{k_2 +1}\gamma \|_{H^1}, \\
	\|\partial^l P^\perp_{\dot{\gamma}} R_1 \gamma \|_{H^1}  & \leq C_{R_1} p_l^{(4n)}\left(\left\{\frac{(|\alpha|+1)!}{r^{|\alpha|+1}} \right\}_{|\alpha| \leq l}, \{C_1\|f_{i}^{(j)}\|_{H^1} \}_{1\leq j \leq l, 1\leq i \leq 4n}\right),\\
	\|\partial^l P^\perp_{\dot{\gamma}} R_2 \gamma \|_{H^1} &  \leq  C_{R_2} p_l^{(4n)}\left(\left\{  \frac{(|\alpha|+1)!}{s^{|\alpha|+1}} \right\}_{|\alpha| \leq l}, \{C_1\|f_{i}^{(j)}\|_{H^1} \}_{1\leq j \leq l, 1\leq i \leq 4n}\right) . 
	\end{align*} 
\end{thm}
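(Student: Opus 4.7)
The plan is simply to pass to the limit $\varepsilon \downarrow 0$ on the left-hand sides of the three estimates, exploiting that the constants on the right-hand sides supplied by Lemmata \ref{est:Q}, \ref{est:R1}, \ref{est:R2} are already independent of $\varepsilon$. Thus no new analytic estimate is required; the work is entirely in showing that the operators and all their derivatives converge in a topology compatible with the $H^1$-bound.

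First I would verify that $\partial^l P^T_\gamma Q^\varepsilon\gamma \to \partial^l P^T_\gamma Q\gamma$ and $\partial^l P^\perp_{\dot\gamma} R_i^\varepsilon\gamma \to \partial^l P^\perp_{\dot\gamma} R_i\gamma$ in $L^\infty(\RZ,\R^n)$ for every $l \in \N_0$ as $\varepsilon \downarrow 0$. For $Q$, the representation (\ref{eq:Qcont}) has integrand $(1-t)^2(\dddot{\gamma}(x+tw)-\dddot{\gamma}(x))/w$ which is bounded uniformly in $(x,w,t)$ by the smoothness of $\gamma$, so dominated convergence yields $Q^\varepsilon\gamma \to Q\gamma$ in $L^\infty$. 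Since $\partial^l Q^\varepsilon \gamma = Q^\varepsilon \partial^l \gamma$, the same argument applied to $\partial^l \gamma$, together with the Leibniz rule applied to $P^T_\gamma Q^\varepsilon\gamma = \langle Q^\varepsilon\gamma,\dot\gamma\rangle_{\R^n} \dot\gamma$, yields $L^\infty$-convergence of all derivatives. For $R_1^\varepsilon$ and $R_2^\varepsilon$ I would use the integral representations from Section \ref{sec:remaining parts}: the integrands are products of the bounded polynomial factors $(r_1-r_2)^2$ resp.\ $(s_1-s_2)^2$ with a composition $G_i(f_1)$ that is bounded on a compact set avoiding the singular locus of $G_i$, since $|\int_0^1\dot\gamma(\cdot+tw)\,\mathrm{d}t|$ stays bounded away from $0$ uniformly for $|w|\leq \tfrac{1}{2}$ by arc-length parametrization and smoothness. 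Dominated convergence, combined with differentiation under the parameter integrals, then yields $\partial^l P^\perp_{\dot\gamma} R_i^\varepsilon\gamma \to \partial^l P^\perp_{\dot\gamma} R_i\gamma$ in $L^\infty$.

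With these convergences at hand, I conclude via a standard weak-compactness and lower-semicontinuity argument. For each fixed $l$, the family $\{\partial^l P^T_\gamma Q^\varepsilon\gamma\}_{0 < \varepsilon \leq 1/2}$ is bounded in $H^1(\RZ,\R^n)$ by Lemma \ref{est:Q}. Along any sequence $\varepsilon_k\downarrow 0$ a subsequence converges weakly in $H^1$, and the weak limit is identified as $\partial^l P^T_\gamma Q\gamma$ by the $L^\infty$-convergence already obtained (weak $H^1$-limits being unique). Lower semicontinuity of the $H^1$-norm under weak convergence gives
$$\|\partial^l P^T_\gamma Q\gamma\|_{H^1} \leq \liminf_{\varepsilon\downarrow 0}\|\partial^l P^T_\gamma Q^\varepsilon\gamma\|_{H^1},$$
so the uniform bound of Lemma \ref{est:Q} transfers. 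The identical argument, applied to $P^\perp_{\dot\gamma} R_1^\varepsilon\gamma$ and $P^\perp_{\dot\gamma} R_2^\varepsilon\gamma$ together with Lemmata \ref{est:R1} and \ref{est:R2}, completes the proof. I do not anticipate a substantive obstacle here: the statement is essentially a ``limit plus lower semicontinuity'' consolidation of the three preceding lemmas, with all genuine analytic work having been carried out in Sections \ref{sec:bilinearHilberttransform} and \ref{sec:remaining parts}.
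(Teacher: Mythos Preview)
Your proposal is correct and follows essentially the same approach as the paper: uniform-in-$\varepsilon$ bounds from Lemmata \ref{est:Q}, \ref{est:R1}, \ref{est:R2}, weak $H^1$-compactness, identification of the weak limit via pointwise (in your case even uniform) convergence, and lower semicontinuity of the $H^1$-norm. The paper packages this as an abstract lemma (pointwise convergence of all derivatives up to order $l$ plus uniform $H^1$-bounds implies the limit inherits the bound, with an extra integration-by-parts step to check that the pointwise limit of $\partial^l f_\varepsilon$ really is $\partial^l f_0$), whereas you argue the $L^\infty$-convergence of each $\partial^l$ directly from the integral representations; both routes are equivalent here.
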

\begin{proof} 
	Let $l\in \N$ and $\{f_\varepsilon\}_{\varepsilon > 0}$ be a set of functions $f_\varepsilon : \RZ \rightarrow \R^n$ such that there exist positive constants $C_i<\infty$ with $\|\partial^i f_\varepsilon\|_{H^1} \leq C_i$ for all $\varepsilon > 0$ and $0\leq i \leq l$. Furthermore, let us assume that $\partial^i f_\varepsilon \overset{\varepsilon \downarrow 0}{\longrightarrow} f_i$ point-wise on $[0,1]$ with $f_i: \RZ \rightarrow \R^n$, $0\leq i \leq l$. Our aim is to show that $\partial^l f_\varepsilon \overset{\varepsilon \downarrow 0}{\longrightarrow} \partial^l f_0 \in H^1(\RZ,\R^n)$ weakly in $H^1$ and hence especially $$\|\partial^l f_0\|_{H^1} \leq \liminf_{\varepsilon\downarrow 0} \| \partial^l f_\varepsilon \|_{H^1} \leq C_l.$$ Then the statement of the theorem follows directly from applying this abstract argument to $f_\varepsilon = \partial^l P^T_\gamma Q^\varepsilon\gamma$, $f_\varepsilon =  \partial^l P^\perp_{\dot{\gamma}} R_1^\varepsilon \gamma$ and $f_\varepsilon =  \partial^l P^\perp_{\dot{\gamma}} R_2^\varepsilon \gamma$ and using the previous Lemmata \ref{est:Q}, \ref{est:R1} and \ref{est:R2}. \\
	Since $H^1 (\RZ,\R^n)$ is a Hilbert space, there exists a subsequence $\{f_{\varepsilon_k}\}_{k \in \N}$ of $\{f_\varepsilon\}_{\varepsilon > 0}$ which converges weakly to a unique function $f\in H^1(\RZ,\R^n)$ in $H^1 (\RZ,\R^n)$ and by the compact embedding $H^1(\RZ,\R^n)\subset \subset C (\RZ,\R^n)$ strongly to $f$ with respect to the uniform norm. Due to the point-wise convergence of $f_{\varepsilon_k} \overset{k\rightarrow \infty}{\longrightarrow} f_0$ and the uniform boundedness of $\{f_{\varepsilon_k}\}_{k \in \N}$, $\{f_{\varepsilon_k}\}_{k \in \N}$ converges weakly to $f_0$ in $C (\RZ,\R^n)$ as well. By the uniqueness of limits of weak convergent series we get $f=f_0$ and since every weakly convergent subsequence of $\{f_\varepsilon\}_{\varepsilon > 0}$ has the same limit, $\{f_\varepsilon\}_{\varepsilon > 0}$ converges weakly to $f_0\in H^1(\RZ,\R^n)$ with respect to the $H^1$-norm and strongly to $f_0$ with respect to the $L^2$norm. By the same argument, $\{\partial^l f_\varepsilon\}_{\varepsilon > 0}$ converges weakly to $f_l\in H^1(\RZ,\R^n)$ with respect to the $H^1$-norm and strongly to $f_l$ with respect to the uniform norm.\\	
	It remains to show $\partial^l f_0 = f_l$ which follows by
	\begin{align*}
	0 & = \lim_{\varepsilon\downarrow 0} \left(\int_0^1 \partial^l f_{\varepsilon} (x) \phi(x) \mathrm{d}x - (-1)^l \int_0^1 f_{\varepsilon} (x) \partial^l \phi (x) \mathrm{d} x  \right) \\
	& =  \int_0^1 f_l (x) \phi(x) \mathrm{d}x - (-1)^l \int_0^1 f_0 (x) \partial^l \phi (x) \mathrm{d} x
	\end{align*}
	for all $\phi\in C^\infty([0,1],\R^n)$ where we used Lebesgue's theorem. 	
\end{proof}

\begin{proof}[Proof of Theorem \ref{main}]
	Let $\gamma :\RZ\rightarrow \R^n$, $\gamma = (\gamma_1,\ldots, \gamma_n)$, be a closed, simple, and by arc-length parametrized curve in $ H^{\frac{3}{2}} (\RZ,\R^n)$ that is a critical point of the Möbius energy $\E$. We deduce by Theorem \ref{smooth} that $\gamma\in C^\infty(\RZ,\R^n)$. Moreover, we set $f:\RZ\rightarrow \R^{4n}$,
	\begin{align*}
	f(x):= \left(\begin{matrix}
	\dot{\gamma}(x)\\
	\ddot{\gamma}(x)\\
	\ddot{\gamma}(x)\\
	\ddot{\gamma}(x)
	\end{matrix}\right)
	\end{align*}
	and $a_l :=C_1  \|\partial^l f\|_{H^1}$ for all $l\in\N$. Our main goal is to determine positive constants $\delta,C_\gamma < \infty$ such that 
	\begin{align*}
	a_l \leq C_\gamma \frac{l!}{\delta^l} 
	\end{align*}
	from which we can directly conclude the analyticity of $\gamma$ on $\RZ$ by Corollary \ref{formula:partialfH}. \\
	As explained above in the beginning of the section, (\ref{formula:His0}) and therefore (\ref{formula:partiallgamma}) hold for any $l\in \N$, more precisely  	
	\begin{align*}
	\begin{split}
	\|\partial^{l+3} \gamma \|_{H^1 } \leq \widetilde{C}\left(\|\partial^l P^T_\gamma Q\gamma\|_{H^1} + \|\partial^l P^\perp_{\dot{\gamma}} R_1\gamma \|_{H^1} + \|\partial^l P^\perp_{\dot{\gamma}} R_2\gamma \|_{H^1} \right).
	\end{split}
	\end{align*}
	Due to the definition of the Sobolev norm and the previous Theorem \ref{thm:estimates} we may conclude that
	\begin{align*}	\begin{split}
	a_{l+1} = & \|\partial^{l+1} f\|_{H^1} \leq 3 \|\partial^{l+3} \gamma \|_{H^1} + \|\partial^{l+2} \gamma \|_{H^1} \\
	\leq & \ 3\widetilde{C} \left(\|\partial^l P^T_\gamma Q\gamma\|_{H^1} + \|\partial^l P^\perp_{\dot{\gamma}} R_1\gamma \|_{H^1} + \|\partial^l P^\perp_{\dot{\gamma}} R_2\gamma \|_{H^1} \right) + a_l\\
	\leq & \ 3\widetilde{C} \bigg( C_Q \sum_{k_1=0}^l \sum_{k_2=0}^{k_1} \binom{l}{k_1} \binom{k_1}{k_2}  \|\partial^{l-k_1+2} \gamma \|_{H^1} \|\partial^{k_1-k_2+2} \gamma \|_{H^1} \|\partial^{k_2 +1}\gamma \|_{H^1} \\
	& + C_{R_1} p_l^{(4n)}\left(\left\{ \frac{(|\alpha|+1)!}{r^{|\alpha|+1}} \right\}_{|\alpha| \leq l}, \{C_1\|f_{i}^{(j)}\|_{H^1} \}_{1\leq j \leq l, 1\leq i \leq 4n}\right) \\
	& +  C_{R_2} p_l^{(4n)}\left(\left\{\frac{(|\alpha|+1)!}{s^{|\alpha|+1}}  \right\}_{|\alpha| \leq l}, \{C_1 \|f_{i}^{(j)}\|_{H^1} \}_{1\leq j \leq l, 1\leq i \leq 4n}\right) \bigg) + a_l.
	\end{split}
	\end{align*}
	Hence, there are constants $C < \infty$ and $r_\gamma>0$ such that
	\begin{align}\label{formula:al+1}
	\begin{split}
	a_{l+1} \leq & \ C \bigg( \sum_{k_1=0}^l \sum_{k_2=0}^{k_1} \binom{l}{k_1} \binom{k_1}{k_2}  a_{l-k_1} a_{k_1-k_2} a_{k_2} \\
	& +p_l^{(4n)}\left(\left\{ \frac{(|\alpha|+1)!}{r_\gamma^{|\alpha|+1}}\right\}_{|\alpha| \leq l}, \{ a_j \}_{1\leq j \leq l}\right) \\
	&+ a_l.
	\end{split}
	\end{align}
	For any $y\in \R^{4n}$, $y = (y_1,\ldots,y_{4n})$, we define a majorant $G: \R^{4n} \rightarrow \R^{4n}$ component-wise as 
	\begin{align*}
	G_i(y) := C \left( y_i^3 + \frac{1 }{(1 +\frac{ 4na_0 -  (y_1 + \cdots + y_{4n})}{r_\gamma})^2}\right) + y_i,
	\end{align*}
	for all $1\leq i \leq 4n$, which is clearly analytic around $a_0 (1,\ldots, 1)$. We now consider a solution to the following initial value problem 
	\begin{align}\label{formula:ODE}
	\begin{split}
	\dot{c}(t) &= G( c(t)), \\
	c(0) &= a_0 (1,\ldots, 1).
	\end{split}
	\end{align}
	The Theorem \ref{CK} implies that (\ref{formula:ODE}) has  a unique solution $c$ in $ C^\infty(]-\varepsilon, \varepsilon[,\R^{4n})$ for some $\varepsilon >0$ which is analytic around $0$. So $c$ can be expressed by its Taylor series in a neighborhood of $0$:
	\begin{align*}
	c (t):= \sum_{k=0}^{\infty} \frac{\widetilde{a}_{k} (1,\ldots, 1)}{k!}  t^k 
	\end{align*}
	where $\widetilde{a}_{k} (1,\ldots, 1)= c^{(k)}(0)$. By Theorem \ref{thm:Tayloranalytic} we know that there exist positive constants $r_c,M < \infty$ such that 
	\begin{align}\label{formula:canalytic}
	|c^{(l)}(t)| \leq M \frac{l!}{r_c^l}
	\end{align}
	for all $t\in B_{r_c}(0)$ and $l\in\N_0$.
	
	For $1\leq i\leq 4n$ and $l\in\N$ we have by Faà di Brunos formula as stated in Remark \ref{rem:FaadiBruno}
	\begin{align*}
	\partial^l G_i(c(t)) = & \ C \bigg(\sum_{k_1=0}^l \sum_{k_2=0}^{k_1} \binom{l}{k_1} \binom{k_1}{k_2}   c_i^{(l-k_1)}(t)  c_i^{(k_1-k_2)}(t) c_i^{(k_2)}(t)  \\
	& +  p_l^{(4n)}\left(\left\{ \frac{(|\alpha|+1)!}{r_\gamma^{|\alpha|+1}} \frac{1}{\left(1 + \left( \frac{4na_0}{r_\gamma} -\frac{\left(c_1(t) + \cdots + c_{4n}(t)\right)}{r_\gamma}\right)\right)^{|\alpha|+2}} \right\}_{|\alpha| \leq l}, \{ c_i^{(j)}(t) \}_{1\leq j \leq l, 1 \leq i \leq 4n}\right)  \bigg) \\
	& + c_i^{(l)} (t),
	\end{align*}
	so we obtain by the initial condition (\ref{formula:ODE})
	\begin{align*}
	\partial^l G_i (c(0)) = & \ C \bigg(  \sum_{k_1=0}^l \sum_{k_2=0}^{k_1} \binom{l}{k_1} \binom{k_1}{k_2}   \tilde a_{l-k_1} \tilde a_{k_1-k_2} \tilde a_{k_2} \\
	& + p_l^{(4n)}\left(\left\{ \frac{(|\alpha|+1)!}{r_\gamma^{|\alpha|+1}} \right\}_{|\alpha| \leq k}, \{\tilde a_j \}_{1\leq j \leq l}\right)  \bigg) + \tilde a_l.
	\end{align*}
	Hence,
	\begin{align}\label{formula:tildeal+1}
	\begin{split}
	\tilde a_{l+1} = & \ C \bigg(  \sum_{k_1=0}^l \sum_{k_2=0}^{k_1} \binom{l}{k_1} \binom{k_1}{k_2}  \tilde a_{l-k_1} \tilde a_{k_1-k_2}  \tilde a_{k_2} \\
	& +p_l^{(4n)}\left(\left\{\frac{(|\alpha|+1)!}{r^{|\alpha|+1}}\right\}_{|\alpha| \leq l}, \{ \tilde a_j \}_{1\leq j \leq l}\right)  \bigg) + \tilde a_l.
	\end{split}
	\end{align}
Comparing (\ref{formula:al+1}) and (\ref{formula:tildeal+1}) and using the fact that the coefficients of the polynomial $p_l$ are non-negative, we get by $a_0 = \widetilde{a}_0 $ and induction
	\begin{align*}
	a_l  \leq \widetilde{a}_{l}. 
	\end{align*}
Hence $\|\partial^{l+1} \gamma \|_{H^1} \leq a_l \leq M\frac {l!}{r_C^l}$ for all $l\in\N_0$  by \eqref{formula:canalytic}. Thus $\gamma$ is analytic by Corollary \ref{formula:partialfH}.
\end{proof}

\end{document}